\theoremstyle{plain}
\newtheorem{thm}{Theorem}[section]
\newtheorem{prop}{Proposition}[section]
\newtheorem{lem}{Lemma}[section]
\newtheorem{rmk}{Remark}[section]
\title[Large return time in small contact volume]{Reeb flows with small contact volume and large return time to a global section}
\author{Murat Sa\u glam}
\address{Murat Sa\u glam, Mathematisches Institut,
Universit\"at zu Köln}
\email{msaglam@math.uni-koeln.de}
\begin{document}
\begin{abstract}
We show that any co-oriented closed contact manifold of dimension at least five admits a contact form such that the contact volume is arbitrarily small but the Reeb flow admits a global hypersurface of section with the property that the minimal period on the boundary of the hypersurface and the first return time in the interior of the hypersurface are bounded below. An immediate consequence of this statement is that every co-oriented contact structure on any closed manifold admits a contact form with arbitrarily large systolic ratio. This generalizes the result of Abbondandolo et al. in dimension three to higher dimensions. The proof the main result is inductive and
uses the result of Abbondandolo et al. on large systolic ratio in dimension three in its basis step. The essential construction in the proof relies on the Giroux correspondence in higher dimensions.
\end{abstract}
\maketitle

%\tableofcontents

\section{Introduction and the results}
The main objects of study in this paper are contact manifolds and Reeb flows on them. A \textit{contact structure} $\xi$ is a maximally non-integrable hyperplane distribution on a $(2n+1)$-dimensional manifold $V$. That is, there exists a 1-form $\alpha$ on $V$ such that $\xi={\rm ker}\,\alpha$ and $\alpha\wedge (d\alpha)^n$ is a volume form\footnote{In this manuscript, any contact structure is co-orientable.}. Note that  We call the pair $(V,\xi)$ a \textit{contact manifold} and $\alpha$ a \textit{contact form} on $(V,\xi)$. Note that for any non vanishing function $f$ on $V$, $f\alpha$ is again a contact form on $(V,\xi)$. We say that a contact manifold $(V,\xi)$ is \textit{co-oriented} if $\xi$ is co-oriented. Note that a choice of a contact form $\alpha$ on $(V,\xi)$ determines a co-orientation for $\xi$ via its sign on $TV/\xi$ and an orientation $V$ via the volume form $\alpha\wedge(d\alpha)^n$. A contact form $\alpha$ also defines a natural dynamical system on $V$ via the \textit{Reeb vector field} $R_\alpha$, which is defined by 
$$\imath_{R_\alpha}d\alpha=0\;\;{\rm and}\;\;\imath_{R_\alpha}\alpha=1.$$
Reeb flows naturally arise as Hamiltonian flows on regular energy levels and in particular they generalize the geodesic flows. Consequently the qualitative and quantitative properties of Reeb flows have been important objects of study in dynamical systems as well as contact/symplectic topology.

A classical approach in the study of the smooth flows is reducing the dynamics of the flow to a discrete dynamics on a suitable section of the flow. Given a smooth flow $\phi^t$ on a closed manifold $V$ a \textit{global (hyper)surface}\footnote{Although one is tempted to use the term global surface of \textit{section} in any dimension due to historical reasons, in what follows we will insist on using the term \textit{hypersurface} in order to stress that we are interested in flows in higher dimension.} \textit{of section} for $\phi^t$ is an embedded compact (hyper)surface $F\subset V$ such that
\begin{enumerate}[label=\text{(gs\arabic*)}]
    \item \label{gss1}$\partial F$ is invariant under $\phi^t$,
    \item \label{gss2}$\phi^t$ is transverse to the interior  $F^\circ=F\setminus \partial F$ of $F$,
    \item\label{gss3}for all $p\in V\setminus\partial F$ there exist $t^+>0$ and $t^-<0$ such that $\phi^{t^\pm}(p)\in F^\circ$.
\end{enumerate}
Given such $F\subset V$, one defines the \textit{first return time}
$$\tau:F^\circ\rightarrow (0,+\infty),\;\;\tau(p):={\rm inf}\{t>0\,|\,\phi^t(p)\in F^\circ\}$$
and consequently the \textit{first return map}
$$\Upsilon
:F^\circ\rightarrow F^\circ,\;\;\Upsilon(p)=\phi^{\tau(p)},$$
which encode the dynamics on $V\setminus \partial F$. An immediate consequence in this vein is that the periodic points of $\Upsilon$ correspond to the closed orbits of $\phi^t$ away from $\partial F$ and the periods of these orbits are now bounded below by $\tau$. We note that in general it is hard to obtain such global section for a given flow but once the global section exists it is very handy to study the flow. For a more elaborate account on the existence of global sections and their applications for Reeb flows, we refer to \cite{survey}. 

The main result of this paper is about the existence of contact forms whose Reeb flows admit global hypersurface of sections with very particular properties. 
\begin{thm}\label{maingss}
Let $(V,\xi)$ be a closed co-oriented contact manifold  such that ${\rm dim}V\geq 5$. Then for any $\varepsilon>0$, there exists a contact form $\alpha$ on $(V,\xi)$ with the following properties.
\begin{itemize}
    \item The Reeb flow of $\alpha$ admits a global hypersurface of section $F$ such that the first return time map satisfies $\tau\geq 1 $ on $F^\circ$.
    \item ${\rm T_{min}}(\alpha|_{\partial F})\geq 1$.
        \item ${\rm vol}(\alpha)\leq \varepsilon$.
\end{itemize}
\end{thm}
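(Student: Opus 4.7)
The plan is to proceed by induction on $\dim V = 2n+1$, with the base case $n = 1$ (dimension three) supplied by the theorem of Abbondandolo et al.\ mentioned in the abstract. Since the statement requires $\dim V \geq 5$, only the inductive step with $n \geq 2$ is new, and the binding of any open book adapted to $(V, \xi)$ will have dimension $2n - 1 \geq 3$, which makes the inductive hypothesis (or the base case when $n = 2$) available for it.

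For the inductive step, the natural tool is the higher-dimensional Giroux correspondence: I would fix an adapted open book $(B, \pi)$ on $(V, \xi)$ whose binding $B$ is a codimension-two closed contact submanifold with induced contact structure $\xi_B$ and whose page $P$ is a Liouville domain with $\partial P = B$. Applying the inductive hypothesis to $(B, \xi_B)$ with a small $\varepsilon' \ll \varepsilon$ yields a contact form $\alpha_B$ with $\mathrm{vol}(\alpha_B) \leq \varepsilon'$, a global hypersurface of section $F_B \subset B$ of first return time $\geq 1$, and $T_{\min}(\alpha_B|_{\partial F_B}) \geq 1$; these together imply that every closed orbit of $R_{\alpha_B}$ on $B$ has period at least $1$, since each either lies on $\partial F_B$ or crosses $F_B^\circ$. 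I would then build $\alpha$ on $V$ in Giroux form: on a tubular neighborhood $B \times \{r \leq r_0\}$ with coordinates $(b, r, \theta)$, take $\alpha = h_1(r)\alpha_B + h_2(r)\,d\theta$ for a standard profile with $(h_1(0), h_2(0)) = (1, 0)$ and Wronskian $h_1 h_2' - h_1' h_2 > 0$; on the complementary mapping torus of the page's monodromy, take $\alpha = \delta\lambda + K\,d\theta$, where $\lambda$ is a primitive of the page symplectic form adjusted in a collar neighborhood of $\partial P$ to match $\alpha_B$ on $B$, and $\delta, K > 0$ are to be fixed. Declaring $F$ to be the closure of a page $P_{\theta_0}$ gives $\partial F = B$ and $\alpha|_{\partial F} = \alpha_B$, so $T_{\min}(\alpha|_{\partial F}) \geq 1$; the standard Reeb computation identifies the first return time as $2\pi K$ in the mapping torus region and as $-2\pi(h_1 h_2' - h_1' h_2)/h_1'$ on each constant-$r$ slice of the binding neighborhood.

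The main obstacle is the simultaneous tuning of $K$, $\delta$, $\varepsilon'$ and the profile $(h_1, h_2)$ so that $\tau \geq 1$ everywhere in $F^\circ$ while $\mathrm{vol}(\alpha) \leq \varepsilon$. On the mapping torus side, $(d\alpha)^n = K\delta^n\,d\theta \wedge \omega_P^n$ (the term involving $\lambda \wedge \omega_P^n$ vanishes for dimension reasons), giving a volume contribution of $2\pi K\delta^n \int_P \omega_P^n$; the choice $K = (2\pi)^{-1}$ pins the return time there at $1$ and leaves a volume of order $\delta^n\,\mathrm{vol}(P, \omega_P)$ controllable by $\delta$. On the binding side the volume is bounded by $\mathrm{vol}(\alpha_B) \leq \varepsilon'$ times a profile-dependent factor, and the required bound $\tau \geq 1$ reduces to $-(h_1 h_2' - h_1' h_2)/h_1' \geq (2\pi)^{-1}$, which is achievable by choosing $-h_1'$ sufficiently small relative to the Wronskian. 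Taking $\varepsilon'$ and $\delta$ small enough in terms of $\varepsilon$ and the geometry of $P$ then forces $\mathrm{vol}(\alpha) \leq \varepsilon$. The genuinely delicate analytical point is ensuring that the profile, the choice of collar for $\lambda$, and the matching condition $h_1(r_0)\alpha_B = \delta\lambda|_{r = r_0}$ at the interface between the two regions can be arranged simultaneously to produce a smooth contact form on $V$ satisfying the return-time bound uniformly across both regions.
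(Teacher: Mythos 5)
Your overall strategy is the same as the paper's: induct on the dimension, feed the inductive hypothesis (of which only the systolic consequence ${\rm T_{min}}\geq 1$, ${\rm vol}\leq\varepsilon'$ is used) into the binding of a supporting open book, and assemble a contact form in Giroux normal form, with a profile $(h_1,h_2)$ near the binding and a rescaled Liouville primitive on the mapping torus; your return-time and volume bookkeeping in the two regions matches the paper's computations around \eqref{alpha_s}, \eqref{alpha_sepsnearK} and \eqref{volumebound} up to normalization. There are, however, two genuine gaps. The first is that $\delta\lambda+K\,d\theta$ does not in general descend to the mapping torus of the monodromy $\psi$: one can arrange $\psi^*d\lambda=d\lambda$, but not $\psi^*\lambda=\lambda$. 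The paper corrects this with the interpolation $\lambda+\kappa(x)(\psi^*\lambda-\lambda)$ in \eqref{alpha_s}; the price is that the Reeb field acquires a page-tangent component $-\kappa'Y$ (see \eqref{ReebsonW'comp}), so the first return time in the mapping torus region is no longer identically $2\pi K$ but only converges to that value uniformly as the scaling parameter tends to $0$ (Lemma \ref{tauboundedawayK}); this is why the paper settles for $\tau\geq\pi$ there. Relatedly, the interface matching that you correctly flag as delicate is resolved in the paper by the ideal Liouville domain normal form $\imath_c^*\lambda=\tfrac1r\sigma_c$ of Lemma \ref{lem:nearK}, with the collar width $r_c$ depending on the chosen binding form $\sigma_c$ and the scaling parameter then taken small relative to $r_c$.

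The second, more serious gap is that you never verify that the form you build defines the \emph{given} contact structure $\xi$. In dimension at least five, producing a contact form adapted to the open book only yields \emph{some} supported contact structure; the uniqueness half of the Giroux correspondence requires controlling the symplectic structures on the pages, and this is where roughly half of the paper's argument lives (Proposition \ref{supporting}). There one shows that the new form is a contact binding $1$-form for a Liouville open book whose pagewise symplectic forms are joined to the original ones by a path of symplectic forms, runs a Moser argument to obtain a fibered diffeomorphism intertwining the two Liouville open books, and then invokes Proposition \ref{prop21} together with Gray stability to conclude that the kernels are diffeomorphic. Your proposal needs this step (or an equivalent appeal to the uniqueness of contact structures supported by a Liouville open book) to be a proof of Theorem \ref{maingss} for $(V,\xi)$ rather than for some unspecified contact structure on $V$.
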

Here ${\rm vol}(\alpha)$ denotes the \textit{contact volume}, that is the volume of $V$ with respect to the volume form associated to $\alpha$ and ${\rm T_{min}}(\alpha|_{\partial F})$ denotes the the minimum of the periods of all closed orbits of the Reeb flow of the contact form $\alpha|_{\partial F}$ on $\partial F$. Note that indeed $\alpha$ restricts to a contact form on $\partial F$ and the Reeb flow of the restriction coincides with the Reeb flow of $\alpha$ on $\partial F$.

An immediate application of the above theorem is the existence of contact forms with arbitrarily large systolic ratio. In order to motivate this application we briefly discuss the history of the systolic ratio in geometry. 

A classical question in Riemannian geometry is the existence of an upper bound on the length of the shortest non-constant closed geodesic in terms of the Riemannian area on a given closed surface. More specifically on a given closed surface $S$, one studies the functional
\begin{equation}\label{sysratio}
 \rho(S,g)=\frac{{\rm l_{min}}(g)^2}{{\rm area}(g)},   
\end{equation}
on the space of all Riemannian metrics, which is invariant under scaling. Here, ${\rm l_{min}}(g)$ denotes the length of the shortest non-constant closed geodesic and ${\rm area}(g)$ denotes the area of $S$ with respect to the metric $g$. 

In 1949, Loewner showed that if in (\ref{sysratio}), ${\rm l_{min}}( g)$ is replaced with ${\rm sys}_1(g)$, namely with the length of a shortest non-contractible geodesic, the corresponding ratio $\rho_{{\rm nc}}(\mathbb{T}^2,\cdot)$ admits an optimal bound. In 1952, Pu proved the existence of an optimal bound on $\rho_{{\rm nc}}(\mathbb{RP}^2,\cdot)$. In fact, in both statements the metrics that maximize $\rho_{{\rm nc}}$ do not admit any contractible geodesic and hence they also maximize (\ref{sysratio}). In the early 80's, Gromov proved that 
$$\rho_{{\rm nc}}(S,\cdot)\leq 2$$
for any non-simply connected closed surface $S$ but this bound is in general non-optimal  \cite{Gromov}. In fact in \cite{Gromov}, Gromov studied the so called \textit{ systolic ratio} in any dimension and showed that for any essential n-dimensional closed manifold $M$,
$$\rho_{{\rm nc}}(M,g)=\frac{{\rm sys}_1(g)^n}{{\rm vol}(g)}$$
admits an upper bound, which depends only on the dimension. On the other hand, in the late 80's Croke gave the first upper bound on $\rho(S^2\cdot)$ in \cite{croke}, which was later improved by several authors.  

A natural direction for the generalization of the problem is weakening the Riemannian assumption on the metric. In fact,
the ratios $\rho$ and $\rho_{{\rm nc}}$ generalize to the Finsler setting by replacing the Riemannian area with the Holmes-Thompson area and the bounds on $\rho$  generalize to the Finsler case \cite{APBT}. For the detailed account of results about the systolic ratio in Riemannian and Finsler geometry, we refer to \cite{3sphere} and \cite{general}.  

The systolic ratio $\rho$ naturally generalizes to Reeb flows. The \emph{contact systolic ratio} on a closed contact manifold $(V,\xi)$ is defined to be the scaling invariant functional 
$$\rho(V,\alpha):=\frac{{\rm T_{min}}(\alpha)^{n+1}}{{\rm vol}(\alpha)}$$
on the space of all contact forms on $(V,\xi)$. We note that the contact systolic ratio is not merely a generalization of the notion to a particular dynamical system but it is strongly related to the classical definition. In fact, given a smooth Finsler manifold $(M,F)$, the canonical Liouville 1-form $pdq$ on the cotangent bundle $T^*S$, restricts to a contact form $\alpha_F$  on the unit cotangent bundle $S^*_FM$. In this case, the Reeb flow is nothing but the geodesic flow restricted to $S^*_F M$ and up to a universal constant, the contact volume  ${\rm vol}(S^*_F M,\alpha_F)$ agrees with the Holmes-Thompson volume of $(M,F)$. Hence the contact systolic ratio of $(S^*_F M,\alpha_F)$ recovers the classical systolic ratio of $(M,F)$.

But it turns out that it is not possible to bound the contact systolic ratio globally. In the case of the tight 3-sphere $(S^3,\xi_{{\rm st}})$, it was shown in \cite{3sphere} that the systolic ratio can be made arbitrarily large. Yet it was also shown that the \emph{Zoll contact forms}, namely the contact forms for which all Reeb orbits are closed and share the same minimal period, are  maximizers of the functional $\rho(S^3,\cdot)$ if the functional is restricted to a $C^3$-neighbourhood of all Zoll contact forms. For any contact 3-manifold $(M,\xi)$, the non-existence of a global bound on $\rho(M,\cdot)$ is later proved by the same authors in \cite{general} whereas in \cite{Benedetti}, the local bound on $\rho(S^3,\cdot)$ was generalized to all contact 3-manifolds that admit Zoll contact forms.

Going back to our result, we note that a contact form $\alpha$ given by Theorem \ref{maingss} immediately satisfies ${\rm T_{min}}(\alpha)\geq 1$ on $V$ since the period of any closed orbit away from $\partial F$ is bounded below by $\tau$. Hence we prove the following generalization\footnote{Here we need to point that $\rho(V,\alpha)$ makes sense only if the Reeb vector field $R_\alpha$ admits a closed orbit. If $\dim V=3$, by a result of Taubes \cite{Taubes}, we know that any contact form on $V$ admits a closed Reeb orbit but in higher dimensions, this might not be the case. Since we aim for the non-existence of a bound on $\rho$, it is legitimate for us to ignore this issue} of the main result of \cite{general}.
\begin{thm}\label{mainsys}
Let $(V,\xi)$ be a closed co-oriented contact manifold  such that ${\rm dim}V\geq 5$. Then for any $\varepsilon>0$, there exists a contact form $\alpha$ on $(V,\xi)$ satisfying 
$${\rm T_{min}}(\alpha)\geq 1\;\;{\rm and} \;\;{\rm vol}(\alpha)\leq \varepsilon.$$
\end{thm}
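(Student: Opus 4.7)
The plan is to derive Theorem \ref{mainsys} as a direct corollary of Theorem \ref{maingss}, exploiting the observation already flagged in the paragraph preceding the statement. Given $\varepsilon>0$, I would invoke Theorem \ref{maingss} to produce a contact form $\alpha$ on $(V,\xi)$ with ${\rm vol}(\alpha)\leq\varepsilon$, equipped with a global hypersurface of section $F$ on which the first return time satisfies $\tau\geq 1$ and the boundary Reeb flow satisfies ${\rm T_{min}}(\alpha|_{\partial F})\geq 1$. The volume bound is then inherited for free; only the systolic inequality ${\rm T_{min}}(\alpha)\geq 1$ requires verification.

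The key step is a simple dichotomy for closed orbits of $R_\alpha$. By property \ref{gss1}, $\partial F$ is Reeb-invariant, so any closed orbit $\gamma$ of period $T$ either lies entirely in $\partial F$ or meets the complement $V\setminus\partial F$. In the first case, $\gamma$ is a closed orbit of the Reeb flow of the restriction $\alpha|_{\partial F}$, since that flow coincides with the restriction of the flow of $R_\alpha$; hence $T\geq {\rm T_{min}}(\alpha|_{\partial F})\geq 1$. In the second case, property \ref{gss3} guarantees that the orbit must pass through $F^\circ$ at some point $p$; the closed orbit condition then gives $\phi^T(p)=p\in F^\circ$, so $T$ lies in the set of positive times $t$ with $\phi^t(p)\in F^\circ$. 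By the infimum definition of $\tau$, this yields $T\geq\tau(p)\geq 1$. Combining the two cases, every closed Reeb orbit has period at least $1$, which is the required bound.

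There is essentially no obstacle at this stage: the entire substance of Theorem \ref{mainsys} sits inside Theorem \ref{maingss}, and the reduction is a one-paragraph argument using only the defining properties \ref{gss1} and \ref{gss3} of a global hypersurface of section. The only care needed is the elementary observation that a closed orbit that crosses $F^\circ$ has period bounded below by the first return time at its crossing point, which is immediate from the definition of $\tau$. Consequently, the proposal for Theorem \ref{mainsys} is to state this dichotomy, apply the two bounds supplied by Theorem \ref{maingss}, and conclude.
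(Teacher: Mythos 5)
Your proposal is correct and is exactly the argument the paper uses: Theorem \ref{mainsys} is obtained as an immediate corollary of Theorem \ref{maingss} via the same dichotomy (closed orbits in $\partial F$ are bounded below by ${\rm T_{min}}(\alpha|_{\partial F})$, closed orbits meeting $V\setminus\partial F$ pass through $F^\circ$ by \ref{gss1} and \ref{gss3} and are bounded below by $\tau$). The paper states this reduction in the paragraph preceding the theorem; you have merely spelled it out in slightly more detail.
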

The key observation for the proof of Theorem \ref{maingss} is that using the machinery of the Giroux correspondence in higher dimensions, see the next section, one can come up with a careful but rather elementary construction of a contact form that reverses the trivial implication uttered above but now with a shift in dimension. More precisely, we prove the following. 
\begin{prop}\label{keyprop}Let $n\geq2$ be fixed and suppose that Theorem \ref{mainsys} holds for dimension $2n-1$. Then Theorem \ref{maingss} holds for dimension $2n+1$.
\end{prop}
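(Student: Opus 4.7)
My plan is to invoke the higher-dimensional Giroux correspondence to realize $(V,\xi)$ as a contact open book with binding $(B,\xi_B)$, a closed co-oriented contact submanifold of dimension $2n-1$, and Weinstein pages of dimension $2n$; the binding will play the role of $\partial F$ and the closure of a fixed page that of $F$. The inductive hypothesis, Theorem~\ref{mainsys} in dimension $2n-1$, then supplies a contact form $\alpha_B$ on $(B,\xi_B)$ with ${\rm T_{min}}(\alpha_B)\ge 1$ and ${\rm vol}(\alpha_B)\le\delta$, where $\delta>0$ will be chosen at the end.

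I would build $\alpha$ by gluing two standard Giroux-type models. Near the binding, on a collar $B\times D^2_\rho$ with coordinates $(b,r,\theta)$, take
\[
\alpha=f(r)\alpha_B+g(r)\,d\theta
\]
with $f(0)=1$, $g(0)=0$, the usual smoothness across $r=0$, contact inequality $fg'-f'g>0$, and edge values $f(\rho)=\lambda$, $g(\rho)=1$ for a small parameter $\lambda>0$; a direct computation gives
\[
R_\alpha=\tfrac{g'\,R_{\alpha_B}-f'\,\partial_\theta}{fg'-f'g},
\]
so $R_\alpha|_B=R_{\alpha_B}$ and on each level $\{r=r_0\}$ the $\partial_\theta$-component is the constant $c(r_0):=-f'(r_0)/(fg'-f'g)(r_0)$. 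Outside the collar, use the mapping-torus form $\alpha=\lambda\beta_\theta+d\theta$ on $V\setminus B$, where $\beta_\theta$ is a $\theta$-dependent Liouville primitive on the page normalized so that $\beta_\theta|_{\partial P}=\alpha_B$; the two models glue smoothly at $\{r=\rho\}$ and for $\lambda$ small $R_\alpha$ in the interior is a small perturbation of $\partial_\theta$ transverse to every page.

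Setting $F$ equal to the closure of a fixed page, properties (gs1)--(gs3) follow by construction, and I check the three required bounds. First, ${\rm T_{min}}(\alpha|_{\partial F})={\rm T_{min}}(\alpha_B)\ge 1$ by the inductive hypothesis. Second, the first return time on level $r$ of the collar is $2\pi/c(r)$, and the bound $c(r)\le 2\pi$ translates into the differential inequality $dg/df\le g/f-1/(2\pi f)$ along the profile path $(f(r),g(r))$, satisfied by any path lying on or above the extremal trajectory $g=(1-f)/(2\pi)$ in the $(f,g)$-plane; in the interior $\tau\approx 2\pi$ for $\lambda$ small. Third, a direct expansion yields
\[
\alpha\wedge(d\alpha)^n=n\,f^{n-1}(fg'-f'g)\,dr\wedge d\theta\wedge\alpha_B\wedge(d\alpha_B)^{n-1}
\]
on the collar, so the volume contribution there equals $2\pi\cdot{\rm vol}(\alpha_B)\cdot n\int f^{n-1}(f\,dg-g\,df)$; the last factor depends only on the image of the profile path and is bounded by an absolute constant. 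The interior region contributes, at leading order, $2\pi\lambda^n$ times the symplectic volume of the page. Taking $\lambda$ small to kill the interior volume and then $\delta$ small via the inductive hypothesis makes the total volume less than $\varepsilon$.

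The main obstacle is the simultaneous feasibility of the three collar constraints---the contact inequality, the first-return bound $c\le 2\pi$, and a bounded volume integral---while smoothly interpolating between the prescribed endpoints $(1,0)$ and $(\lambda,1)$ and matching the interior model at $\{r=\rho\}$. The $(f,g)$-plane analysis shows these are compatible: the extremal trajectory $g=(1-f)/(2\pi)$ marks exactly the boundary of admissible first-return behavior, and any smooth path staying slightly above it and turning up to $(\lambda,1)$ near $f=\lambda$ meets all constraints, with the minor smoothing issues (corner rounding and compatibility with the mapping-torus model) handled by elementary explicit interpolation. Once such a profile is in hand, the remaining verifications are standard open-book contact geometry, and the result follows by assembling the estimates above.
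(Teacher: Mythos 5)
Your construction follows the same route as the paper: take a supporting open book, apply the inductive hypothesis to the binding, interpolate in a collar with a profile $(f,g)$ subject to the contact, return-time and volume constraints, and use a mapping-torus model with small parameter away from the binding. The collar analysis (the formula for $R_\alpha$, the return time $2\pi(g-fg'/f')$, the volume integrand $nf^{n-1}(fg'-f'g)$) matches the paper's Lemmas \ref{contactnear} and the estimates around \eqref{tau_scbounded}--\eqref{volumebound}, and the feasibility of the profile path is indeed elementary (the paper's Lemma \ref{fandg}).

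However, there is a genuine gap: you never verify that the contact form you build is a contact form \emph{on $(V,\xi)$}, i.e.\ that its kernel is isotopic to the given $\xi$. This is not a routine verification in dimension $\geq 5$. You have replaced the contact form on the binding by an arbitrary small-volume form $\alpha_B$ supplied by the inductive hypothesis and rebuilt the form on the pages accordingly; the uniqueness of the contact structure supported by an open book in higher dimensions holds only after fixing the symplectic (ideal Liouville) structures on the pages, which is exactly why the paper works with Liouville open books. Two things must be supplied. First, that the prescribed boundary behavior $\beta_\theta|_{\partial P}=\alpha_B$ is achievable for the \emph{original} page structure: this is Lemma \ref{lem:nearK}, which identifies a collar of $K$ in the page with $[0,\infty)\times K$ so that the ideal Liouville form reads $\sigma_c/r$ for the chosen $\sigma_c$. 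Second, and more substantially, that the new form is a contact binding $1$-form for a Liouville open book whose page-wise ideal Liouville structures are isotopic (through ILSs, equivariantly with respect to the monodromy) to the original ones; the paper establishes this via an explicit convex interpolation of primitives on each collar piece, a Moser isotopy producing the fiber-preserving diffeomorphism $\Phi$ of Proposition \ref{supporting}, and then Proposition \ref{prop21} together with Gray stability. This occupies roughly half of the paper's proof and cannot be absorbed into ``standard open-book contact geometry.'' Relatedly, your mapping-torus model must incorporate the symplectic monodromy $\psi$ explicitly (the paper's term $s\kappa(x)\lambda_\psi$ in \eqref{alpha_s}) for the form to descend to $V\setminus K$ and for the contactness constant $s_0$ of Lemma \ref{contactaway} to be controlled; writing ``$\lambda\beta_\theta+d\theta$ with $\beta_\theta$ a $\theta$-dependent primitive'' leaves both the descent condition and the identification of the resulting contact structure unaddressed.
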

Assuming Proposition \ref{keyprop}, we prove Theorem \ref{maingss} by induction on the dimension. By Proposition \ref{keyprop} and the main result of \cite{general}, which is Theorem \ref{mainsys} in dimension three, we see that Theorem \ref{maingss} holds in dimension five. Now we assume that Theorem \ref{maingss} holds in dimension $2n-1$ for some $n\geq 3$. Then as we observed above Theorem \ref{mainsys} holds in dimension $2n-1$. Proposition \ref{keyprop} implies that Theorem \ref{maingss} holds in dimension $2n+1$ and this finishes the proof.

In \cite{general}, the three dimensional version of Theorem \ref{mainsys} is proven as follows.  On a given closed co-oriented contact three manifold, one constructs a contact form, for which the Reeb flow is Zoll on an invariant domain that occupies arbitrarily large portion of the total contact volume and away from this domain the periods of closed Reeb orbits are bounded away from zero. This construction is carried out on a supported open book decomposition. Then one modifies the Zoll contact form in the large portion with suitable plugs so that the most of the contact volume is eaten up but the minimal period is still bounded away from zero.  The author of this paper provided a proof of Theorem \ref{mainsys} in \cite{msold}, which is a direct generalization of the proof of \cite{general}. It turns out that one can prove the stronger statement given in Theorem \ref{maingss} in a much simpler way and get a proof of Theorem \ref{mainsys} without any plug construction.

The key construction in the proof of Proposition \ref{keyprop} takes place in a  Liouville open book that supports the contact structure symplectically and one uses Theorem \ref{mainsys} to get a contact form on the binding such that contact volume is small and the minimal period is bounded away from zero.  We note that the construction given here does not apply to dimension three since in this case the binding of the open book is one dimensional and the minimal period coincides with the volume. In that sense, the plug construction seems to be essential for the proof of the three dimensional version of Theorem  \ref{mainsys}. We remark that a construction similar to the one in Proposition \ref{keyprop} is used in \cite{entropy} by the authors in order to show the entropy collapse phenomenon in Reeb flows.  
\newline
\textbf{Acknowledgements.} I thank Marcelo Alves, who pointed out that such a proof should work and motivated this paper.  I thank Alberto Abbondandolo  for his comments on this manuscript. I am indebted to the anonymous referee providing insightful comments on the first submitted version of this article and providing directions for additional work which has resulted in the final version of this article. This work is part of a project in the SFB/TRR 191 `Symplectic Structures in Geometry,
Algebra and Dynamics', funded by the DFG.
\section{Liouville open books}
In this section we recall the generalities on Giroux correspondence between the contact structures and supported open books in higher dimensions. For the details, we refer to \cite{openbook} and \cite{ILD}.

Let $F$ be a $2n$-dimensional compact manifold with boundary $K$ and let $F^\circ$ be the interior of $F$. An \emph{ideal Liouville structure}, abbreviated as ILS,  on $F$ is an exact symplectic form $\omega\in \Omega^2(F^\circ)$ with the property that there exists a primitive $\lambda \in \Omega^1(F^\circ)$ and a smooth function such that
\begin{equation}\label{prop_of_u}
u:F\rightarrow [0,+\infty),\;\; {\rm where}\;\; K=u^{-1}(0)\;\; \textrm {is a regular level set,} 
\end{equation}
and the 1-form $u\lambda$ on $F^\circ$ extends to a smooth 1-form on $F$, which is a contact form along $K$. In this case the pair $(F,\omega)$ is called an\emph{ ideal Liouville domain}, shortened to  ILD, and any primitive $\lambda$ of above property is called an \emph{ideal Liouville form}, abbriviated as ILF. It turns out that given an ILF $\lambda$ then the 1-form $u\lambda$ extends to a contact form on $K$ for any function $u$ satisfying \eqref{prop_of_u} but the contact structure 
$$\xi:=\ker (u\lambda)|_{TK}$$
depends only on the 2-form $\omega$, see Proposition 2 in \cite{ILD}. Hence the pair $(K,\xi)$ is called the \emph{ideal contact bounday} of $(F,\omega)$. Moreover, once $\lambda$ is chosen, one can recover all possible (positive) contact forms on $(K,\xi)$ by restricting the extension of $u\lambda$ to $K$ as $u$ moves among the functions with the property (\ref{prop_of_u}). We note that the orientation of $K$ that is determined by the co-oriented contact structure $\xi$ coincides with the orientation of $K$ as the boundary of $(F,\omega)$. 

A very useful feature of an ILD is that a neighbourhood of its boundary can be identified with the symplectization the boundary. 
\begin{lem} \label{lem:nearK} 
Let $(F,\omega)$ be an ILD and $\lambda$ be an ILF. Let $u$ be a function satisfying~\eqref{prop_of_u} 
and let $\beta$ be the extension of~$u\lambda$. Then for any positive contact form $\alpha_0$ on~$(K,\xi)$, 
there exists an embedding 
\begin{eqnarray*}
\imath : [0,+\infty) \times K \rightarrow F
\end{eqnarray*}
such that 
$$
\imath^*\lambda=\frac{1}{r}\alpha_0 \: \textrm{ and } \: \imath(0,q)=q \: \textrm{ for all } \: q\in K ,
$$ 
where $r \in [0,+\infty)$. 
\end{lem}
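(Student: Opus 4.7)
The plan is to use the freedom in choosing the defining function $u$ to normalize the boundary data, construct a local collar near $K$ using a tubular neighborhood adapted to $\ker\beta$ and corrected by Gray stability, and then extend to all of $[0,\infty)\times K$ via the Liouville flow. Since every positive contact form on $(K,\xi)$ arises as $\tilde u\lambda|_K$ for an appropriate $\tilde u$, I first replace $u$ by $u/g$, where $g:F\to(0,\infty)$ is a smooth positive extension of the ratio $\beta|_K/\alpha_0$, so that $\beta|_K=\alpha_0$. I then pick a vector field $W$ on a neighborhood of $K$ with $\beta(W)=0$ and $du(W)=1$; such $W$ exists because $\beta$ and $du$ are linearly independent at every point of $K$ (the former is nonzero on $TK$, the latter vanishes on $TK$). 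Flowing along $W$ yields a tubular neighborhood $\Phi_0:[0,\epsilon)\times K\to F$ with $\Phi_0^*u=r$ and $\Phi_0^*\beta=\gamma_r$, a smooth family of $1$-forms on $K$ with no $dr$-component and $\gamma_0=\alpha_0$; by openness of the contact condition, $\gamma_r$ is a positive contact form on $(K,\xi)$ for all small $r$.

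I then apply Gray's stability theorem to $\gamma_R$, with the generating vector field chosen inside $\ker\gamma_R$, to obtain an isotopy $\psi_R:K\to K$ and a positive function $h:[0,\epsilon_1)\times K\to(0,\infty)$ such that $\psi_0=\mathrm{id}$, $h(0,\cdot)\equiv 1$, $\psi_R^*\gamma_R=h(R,\cdot)\alpha_0$, and $\gamma_R(\partial_R\psi_R)=0$. Define $R(r,q)$ implicitly by $R = r\cdot h(R,q)$, uniquely solvable for small $r$ by the implicit function theorem (the derivative with respect to $R$ at the origin is $1$), and set $\Phi(r,q):=\Phi_0\bigl(R(r,q),\psi_{R(r,q)}(q)\bigr)$. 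A direct pullback computation shows that the vanishing of $\beta(W)$ kills the $dr$-component coming from the radial direction, the vanishing of $\gamma_R(\partial_R\psi_R)$ kills the one coming from the isotopy, and combined with $\Phi^*u=R(r,q)$ and the defining equation for $R$, this yields
\[
\Phi^*\lambda \;=\; \frac{\Phi^*\beta}{\Phi^*u} \;=\; \frac{h(R,q)\,\alpha_0}{R(r,q)} \;=\; \frac{\alpha_0}{r},
\]
together with $\Phi(0,q)=q$.

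Finally, I extend $\Phi$ to all of $[0,\infty)\times K$ using the Liouville vector field $X$ on $F^\circ$ defined by $\imath_X\omega=\lambda$. For a fixed $r_0\in(0,\epsilon')$ set $\imath(r,q):=\phi_X^{\log(r_0/r)}\bigl(\Phi(r_0,q)\bigr)$; since $\Phi$ conjugates the model Liouville vector field $-r\partial_r$ to $X$ on its image and $(\phi_X^t)^*\lambda=e^t\lambda$, this formula agrees with $\Phi$ on $[0,\epsilon')\times K$ and extends the collar outward by backward Liouville flow. Injectivity of $\imath$ follows from $(\phi_X^t)^*\omega=e^t\omega$, which precludes any periodic orbits.

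The main obstacle is the coordinated killing of the two $dr$-components, from the tubular and isotopic constructions, via the kernel-valued choices of $W$ and Gray's generator, together with the magnitude-matching via the implicit equation $R=r\,h(R,q)$. A secondary technicality is completeness of the backward Liouville flow, which follows from the smooth extension of $X$ to all of $F$ with $X|_K=0$ (visible from expanding $\imath_X(u^2\omega)=u\beta$ near $K$) and the compactness of $F$.
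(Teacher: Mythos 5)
The paper itself does not prove Lemma \ref{lem:nearK}; it only refers to Proposition 3 of \cite{ILD} and to \cite{msold}, so there is no in-text proof to compare against. Your argument is a correct, self-contained proof along the expected lines: renormalize $u$ so that $\beta|_{TK}=\alpha_0$, build a collar $\Phi_0$ with $\Phi_0^*u=r$ and $\Phi_0^*\beta$ free of $dr$-terms, correct the slice forms by Gray stability with generator in the contact hyperplanes, absorb the conformal factor via the radial reparametrization $R=r\,h(R,q)$, and propagate by the backward Liouville flow. The key cancellation works exactly as you describe: $\beta(W)=0$ and $\gamma_R(\partial_R\psi_R)=0$ together give $\Phi^*\beta=h(R(r,q),q)\,\alpha_0$ with no $dr$-component, and $\Phi^*u=R$ then yields $\Phi^*\lambda=\alpha_0/r$. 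Three secondary justifications should be tightened. First, for $r>0$ the form $\gamma_r$ is a contact form on $K$ but not on $(K,\xi)$ --- its kernel is only $C^0$-close to $\xi$ --- which is of course precisely why Gray stability is invoked; just fix the wording. Second, injectivity (and properness onto the image, which you need for $\imath$ to be an embedding of the non-compact $[0,\infty)\times K$) does not follow from the absence of periodic orbits alone; the correct statement is that each Liouville orbit crosses the hypersurface $\Sigma=\Phi(\{r_0\}\times K)$ exactly once, because $X$ is transverse to $\Sigma$ and points out of the compact piece $F\setminus\Phi([0,r_0)\times K)$, so backward orbits enter that piece and never return while forward orbits stay in $\{r<r_0\}$; this transversality argument also gives backward completeness directly, without needing $X$ to extend over $K$. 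Third, the claimed smooth extension of $X$ to $F$ via $\imath_X(u^2\omega)=u\beta$ is not immediate, since $u^2\omega$ degenerates along $K$ when $\dim F\geq 4$; fortunately it is not needed for the argument.
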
 
The above statement is a reformulation of Proposition~3 in~\cite{ILD} and a proof of this concrete version can be found in \cite{msold}.

Ideal Liouville domains are useful for clarifying the  uniqueness of the contact structures supported by open books in higher dimensions. An \textit{open book} in a closed manifold $V$ is a pair $(K,\Theta)$ where
\begin{enumerate}[label=\text{(ob\arabic*)}]
\item \label{ob1}$K\subset V$ is a closed co-dimension two submanifold with trivial normal bundle;
\item \label{ob2}$\Theta: V\setminus K\rightarrow S^1=\mathbb{R}/2\pi\mathbb{Z}$ is a locally trivial fibration such that $K$ has a neighbourhood $U$, which admits a parametrization $(re^{ix},q)\in \mathbb{D}\times K\cong U$ so that $\Theta$ reads as $\Theta(re^{ix},q)=x$ on $U$.
\end{enumerate}  
Here $K$ is called the \textit{binding} of the open book and the closures of the
fibres of $\Theta$ are called the \textit{pages}. The pages share the common boundary $K$. We note that the canonical orientation of $S^1$ induces co-orientations on the pages and the binding. Hence if $V$ is oriented then so are the pages and the binding. An equivalent way of defining an open book is via a \textit{defining function}. Let $h:V\rightarrow \mathbb{C}$ be a smooth function such that 
\begin{enumerate}[label=\text{(h\arabic*)}]
\item \label{h1}$h$ vanishes transversely;
\item \label{h2}$\Theta:=h/|h|:V\setminus K\rightarrow S^1$ has no critical points, where $K:=h^{-1}(0)$.
\end{enumerate}
Then the pair $(K,\Theta)$ satisfies \ref{ob1} and \ref{ob2}. Conversely any open book in $V$ can be recovered by a function satisfying \ref{h1} and \ref{h2}, which is unique up to multiplication by a positive function on $V$.

Given an open book $(K,\Theta)$ in a closed manifold $V$, one is able to unfold $V$ as follows. One picks  a vector filed $X$,  namely a \emph{spinning vector field}, on $V$ such that 
\begin{enumerate}[label=\text{(m\arabic*)}]
\item \label{m1}$X$ lifts to a smooth vector field on the manifold with boundary obtained from $V$ by a real oriented blow-up along $K$;
\item \label{m2}$X=0$ on $K$ and $(\Theta^*dx)(X)=2\pi$ on $V\setminus K$.
\end{enumerate}
Then the time-one-map of the flow of $X$ is a diffeomorphism 
$$\phi: F\rightarrow F$$ 
of the $0$th-page $F:=\Theta^{-1}(0)\cup K$, which fixes $K$ pointwise. The isotopy class $[\phi]$ of $\phi$ among the diffeomorphisms of $F$ that fixes $K$ pointwise, is called the \emph{monodromy} of the open book and the pair $(F,[\phi])$ characterize the open book as follows. Given $(F,\phi)$, one defines the mapping torus
\begin{equation}\label{eq:MT}
MT(F,\phi):=([0,2\pi]\times F )\big/\sim \,;\; (2\pi,q)\sim (0,\phi(q)),
\end{equation} 
which is a manifold with boundary together with the natural fibration 
$$\widetilde{\Theta}:MT(F,\phi)\rightarrow S^1.$$
Note that all fibres of $\widetilde{\Theta}$ are diffeomorphic to $F$ and there is a natural parametrization of the fibre $\widetilde{\Theta}^{-1}(0)$ via the restriction of the quotient \eqref{eq:MT} to $\{0\}\times F$. It is not hard to see that if $\phi'\in [\phi]$, then there is a diffeomorphism between $MT(F,\phi)$ and $MT(F,\phi')$ that respects the fibrations over $S^1$ and the natural parametrizations of the $0$-th pages. Now given $MT(F,\phi)$, one collapses its boundary, which is diffeomorphic to $S^1\times K$, to $K$ and obtains so called the \emph{abstract open book} $OB(F,\phi)$. Note that the closed manifold $OB(F,\phi)$ admits an open book given by the pair $(K,\Theta) $ where $\Theta$ is induced from $\widetilde{\Theta}$. Moreover, for $\phi'\in [\phi]$, the diffeomorphism between $MT(F,\phi)$ and $MT(F,\phi')$ descends to a diffeomorphism between corresponding abstract open books. Consequently $V$ and $OB(F,\phi)$ may be identified together with their open book structures. We note that one may choose a vector field $X$ that is actually smooth on $V$ (compare with \ref{m1}) and even 1-periodic near $K$ so that the map $\phi={\rm id}$ near $K$ and the boundary of $MT(F,\phi)$ has a neighbourhood of the form $(r_0,0]\times S^1\times K$, which collapses to $r_0\mathbb{D}\times K$ via the map
\begin{equation}\label{eq:collapse}
(r_0,0]\times S^1\times K\rightarrow r_0\mathbb{D}\times K,\;(r,x,q)\mapsto (re^{ix},q).
\end{equation}

The importance of open books in contact topology originates from the following definition. A co-oriented contact structure $\xi$ on a closed manifold $V$ is \emph{supported} by an open book $(K,\Theta)$ on $V$ if there is a contact form $\alpha$ on $(V,\xi)$ such that 
\begin{itemize}
\item $\alpha$ restricts to a (positive) contact form on $K$;
\item $d\alpha$ restricts to a (positive) symplectic form on each fibre of $\Theta$.
\end{itemize}
The celebrated Gioux correspondence tells us that given a closed contact manifold $V$ (isotopy classes of) co-oriented contact structures are in one-to-one correspondence with (equivalence classes of) supporting open books. For our purposes we need to go over certain pieces of this statement in detail. 
\begin{thm}\label{thm10}(Theorem 10 in \cite{openbook}) Any contact structure on a closed manifold is supported by an open book with Weinstein pages.   
\end{thm}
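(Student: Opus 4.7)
The plan is to invoke the approximately holomorphic technique of Donaldson, adapted to contact geometry by Giroux and Mohsen. Fix a contact form $\alpha$ for $\xi$ together with a $d\alpha$-compatible complex structure $J$ on the hyperplane field $\xi$. After rescaling $\alpha$ by a positive constant, one may assume that $[d\alpha]/2\pi$ represents an integral cohomology class, so that there is a Hermitian line bundle $L\to V$ carrying a unitary connection of curvature $-i\,d\alpha$.

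For $k$ sufficiently large I would then construct a section $s$ of the tensor power $L^{\otimes k}$ that is approximately $J$-holomorphic along $\xi$ and satisfies a Donaldson-type uniform transversality estimate. The standard iterative perturbation scheme produces such an $s$ with the additional property that it vanishes transversely along a codimension-two submanifold $K:=s^{-1}(0)$ and that the normalization $\Theta:=s/|s|$ is a submersion from $V\setminus K$ onto $S^1$. The approximately holomorphic behaviour of $s$ near $K$ provides the local product model $\mathbb{D}\times K$ in which $\Theta$ reads as the angular coordinate, so $(K,\Theta)$ is a genuine open book in the sense of \ref{ob1}--\ref{ob2}.

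It remains to verify that this open book supports $\xi$ with Weinstein pages. The approximately holomorphic condition guarantees that $d\alpha$ restricts to a positive symplectic form on every fibre of $\Theta$ and that $\alpha|_K$ is a positive contact form on the binding, which is exactly the supporting condition. On each page one may then build a plurisubharmonic Morse function out of $|s|^{1/k}$ after a small generic perturbation, whose gradient with respect to the metric determined by $\alpha$ and $J$ is Liouville and has only critical points of index at most $n$; this exhibits the page as a Weinstein domain with $K$ as its convex boundary. The main obstacle lies entirely in the approximately holomorphic input: establishing the uniform transversality estimate for $s$ on a closed contact manifold requires the full Donaldson-type iterative perturbation scheme at scales of order $k^{-1/2}$, together with the CR-linearization needed to talk about approximate $J$-holomorphicity along $\xi$, and this analytic package is the technical heart of the Giroux-Mohsen construction.
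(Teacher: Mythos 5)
This statement is not proved in the paper at all: it is quoted verbatim as Theorem~10 of Giroux's 2002 ICM address and used as a black box, the actual proof being the (long unpublished) work of Giroux--Mohsen. So the relevant comparison is with that cited argument, and your sketch does correctly identify its strategy: asymptotically holomorphic sections of $L^{\otimes k}$ with Donaldson-type estimated transversality, binding $K=s^{-1}(0)$, pages as fibres of $s/|s|$, and Weinstein structures on the pages coming from the approximate plurisubharmonicity of $\log|s|$. In that sense you have reconstructed the right route, and it is the same route the paper implicitly relies on by citation.

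Two caveats. First, your remark about rescaling $\alpha$ so that $[d\alpha]/2\pi$ is integral is beside the point: $d\alpha$ is exact, so its class is zero and $L$ is the \emph{trivial} line bundle equipped with the connection $d-i\alpha$. This triviality is not a technicality --- it is exactly what makes $s$ a $\mathbb{C}$-valued function and $\Theta=s/|s|$ an honest circle-valued map on $V\setminus K$; for a nontrivial $L^{\otimes k}$ your definition of $\Theta$ would not make sense. Second, as you concede yourself, everything that makes the theorem true is concentrated in the step you defer: the iterative perturbation scheme producing uniform transversality for $s$ and for $\partial(s/|s|)$ at scale $k^{-1/2}$, the verification that the resulting $(K,\Theta)$ satisfies the local model \ref{ob2}, the passage from ``$d\alpha$ is approximately positive on the fibres'' to the exact supporting condition (which requires an isotopy/interpolation argument), and the Morse-theoretic analysis giving index $\le n$. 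As written, the proposal is an accurate roadmap to the Giroux--Mohsen proof rather than a proof; for the purposes of this paper that is consistent with how the statement is used, namely as an imported result.
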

The above statement on the existence is the core part of the correspondence between supporting open books and contact structures. Concerning the uniqueness features of the Giroux correspondence, we are mainly interested in one side, namely the uniqueness of supported contact structures. The main idea is that given an open book, the symplectic geometry of the pages determines the supported contact structures. In dimension three, any two symplectic structure on a page are isotopic since they are simply two area forms on a given surface. Hence the uniqueness statement simplifies a lot. But in higher dimensions, one has to pay attention to the symplectic structure on pages. In \cite{ILD}, Giroux introduced the notion of a Liouville open book, which clears out these technicalities. 

A \emph{Liouville open book}, abbreviated as  LOB, in a closed manifold $V$ is a tripple $\left(K,\Theta,\{\omega_x\}_{x\in S^1}\right)$ where
\begin{enumerate}[label=\text{(lob\arabic*)}]
\item \label{lob1}$(K,\Theta)$ is an open book on $V$ with pages $F_x=\Theta^{-1}(x)\cup K$, $x\in S^1$;
\item \label{lob2} $(F_x,\omega_x)$ is an ILD for all $x\in S^1$ and the following holds:  there is a defining function $h:V\rightarrow \mathbb{C}$ for $(K,\Theta)$ and a $1$-form  $\beta$ on $V$ such that the restriction of $d(\beta/|h|)$ to each page is an ILF. More precisely, 
$$\omega_x=d(\beta/|h|)_{|TF_x^\circ}$$
for all $x\in S^1$.
\end{enumerate}   
The 1-form $\beta$ in \ref{lob2} is called  a \emph{binding 1-form} associated to $h$. Note that if $h'$ is another defining function for $(K,\Theta)$, then  $h'=\kappa h$  for some positive function $\kappa$ on $V$ and $\beta':=\kappa \beta$ is a binding 1-form associated to $h'$. 

Similar to classical open books, LOB's are characterized by the monodromy, which now has to be symplectic. Namely, one  considers a \emph{symplectically spinning vector field}, that is a vector filed $X$ satisfying \ref{m1}-\ref{m2} and generating the kernel of a closed 2-form on $V\setminus K$, which restricts to $\omega_x$ for all $x\in S^1$. Given such a vector field, the time-one-map of its flow, say $\phi$, is a diffeomorphism of $F:=F_0$, which fixes $K$ and preserves $\omega:=\omega_0$. The isotopy class $[\phi]$, among the symplectic diffeomorphisms that fixes $K$, is called the \emph{symplectic monodromy} and characterizes the given LOB. For the construction of a LOB in the abstract open book $OB(F,\phi)$, where $\phi^*\omega=\omega$, we refer to Propostion 17 in \cite{ILD} and our construction in the next section. Similar to spinning vector fields, it is possible to choose a symplectically spinning vector filed whose flow is 1-periodic near the binding. 
\begin{lem}\label{lem15}(Lemma 15 in \cite{ILD}) Let $(K, \Theta,\{\omega_x\}_{x\in S^1})$ be a LOB on a closed manifold $V$ and $h: V\rightarrow \mathbb{C}$ be a defining function for $(K,\Theta)$. Then for every
binding 1-form $\beta$, the vector field $X$ on $V\setminus K$ spanning the kernel of $d(\beta/|h|)$ and
satisfying $(\Theta^*dx)(X)=2\pi$ extends to a smooth vector field on $V$ which is zero along K.
Furthermore, $\beta$ can be chosen so that $X$ is 1-periodic near K.
\end{lem}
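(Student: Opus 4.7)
The plan is to first construct $X$ on $V\setminus K$ using symplectic non-degeneracy on the pages, then analyse its behaviour in a standard tubular neighbourhood of the binding to see that it extends smoothly across $K$ and vanishes there, and finally to refine the choice of $\beta$ so that the extension becomes exactly the rotation of the disk factor near $K$. On $V\setminus K$ the 2-form $\Omega:=d(\beta/|h|)$ is closed and has constant rank $2n$, because it restricts to the symplectic form $\omega_x$ on every page $F_x^\circ$. Its kernel is therefore a smooth line field, necessarily transverse to pages (a tangential kernel direction would lie in $\ker\omega_x=0$), so $(\Theta^*dx)|_{\ker\Omega}$ is nowhere zero and there is a unique smooth $X\in\ker\Omega$ with $(\Theta^*dx)(X)=2\pi$. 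Splitting $X=2\pi\partial_x+W$ with $W$ tangent to a page in a local product coordinate across $\Theta$, the equation $i_X\Omega=0$ contracted with $TF_x^\circ$ reads $i_W\omega_x=-2\pi\,i_{\partial_x}\Omega|_{TF_x^\circ}$ and solves uniquely for $W$.

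To analyse the behaviour near $K$, I will work in the local model $\mathbb{D}\times K$ supplied by the open book, with defining function $h(y,q)=y_1+iy_2$, rectangular coordinates $(y_1,y_2,q)$ on $V$ and polar coordinates $(r,x,q)$ on the blow-up. A smooth 1-form $\beta$ pulls back there as $\beta=P\,dr+rQ\,dx+R_i\,dq^i$ with $P,Q,R_i$ smooth, and smoothness on $V$ forces $R_i(0,x,q)=(\alpha_0)_i(q)$ independent of $x$, where $\alpha_0:=\beta|_K$ is the contact form on $K$ determined by $\beta$. Decomposing $X=2\pi\partial_x+a\partial_r+V^i\partial_{q^i}$ and expanding the smooth 2-form $r^2\Omega$ near $r=0$: the coefficient of $dq^i$ in $i_X(r^2\Omega)=0$ at $r=0$ gives $a(0,x,q)\alpha_0=0$; the coefficient of $dr$ gives $\alpha_0(V(0,x,q))=0$; and the next order of the $dq^i$ equation reads $i_{V(0)}d\alpha_0=\tilde a(0)\alpha_0$ with $\tilde a:=a/r$. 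The contact non-degeneracy of $d\alpha_0$ on $\ker\alpha_0$ then forces both $V(0)=0$ and $\tilde a(0)=0$. Propagating this inductively through higher orders of the Taylor expansion and converting back via $\partial_r=\cos x\,\partial_{y_1}+\sin x\,\partial_{y_2}$ and $\partial_x=-y_2\partial_{y_1}+y_1\partial_{y_2}$, one deduces that $X$ extends to a smooth vector field on $V$ vanishing on $K$.

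For the 1-periodicity, the natural choice near $K$ is to take $\beta$ equal to the pullback of $\alpha_0$ along $\mathbb{D}\times K\to K$, i.e.\ $P=Q=0$ and $R_i=(\alpha_0)_i(q)$ independent of $r,x$. For this $\beta$, $\Omega=-r^{-2}dr\wedge\alpha_0+r^{-1}d\alpha_0$ exactly, and the kernel system collapses to $\alpha_0(V)=0$, $i_Vd\alpha_0=(a/r)\alpha_0$, whose unique solution is $V\equiv 0$ and $a\equiv 0$ identically in the neighbourhood. Hence $X=2\pi\partial_x=2\pi(-y_2\partial_{y_1}+y_1\partial_{y_2})$ near $K$, which generates the standard rotation of $\mathbb{D}$ with period exactly $1$. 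To realise such a $\beta$ globally, I start from any binding 1-form $\beta_0$ and modify it near $K$ so that in the local model it becomes $\alpha_0$, exploiting the affine space of binding 1-forms for the fixed LOB and Lemma~\ref{lem:nearK} to arrange a cutoff interpolation on an annulus that does not disturb the given ILS $\{\omega_x\}$ on the pages. The main obstacle I expect is precisely this last step, together with the smoothness descent in the second paragraph: in both cases one must propagate parity conditions in the angular variable $x$ through higher-order Taylor expansions, repeatedly invoking the contact non-degeneracy of $d\alpha_0$ at each order, so as to land on objects smooth on $V$ itself rather than only on the blow-up.
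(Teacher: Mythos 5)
This statement is quoted verbatim as Lemma~15 of Giroux's paper \cite{ILD}; the present paper offers no proof of it, so there is no in-paper argument to compare yours against. Judged on its own, your outline follows the standard route (and, in broad strokes, Giroux's): the first paragraph is correct and complete --- a closed $2$-form on a $(2n+1)$-manifold restricting symplectically to the pages has a line-field kernel transverse to the pages, so $X$ exists and is smooth on $V\setminus K$. Your pointwise computations near $K$ are also right: $r^2\Omega|_{r=0}=-dr\wedge\alpha_0$ gives $a(0)=0$ and $\alpha_0(V(0))=0$, and for $\beta=\pi_K^*\alpha_0$ the identity $\Omega=-r^{-2}dr\wedge\alpha_0+r^{-1}d\alpha_0$ does force $X=2\pi\partial_x$, which is smooth on $\mathbb{D}\times K$, vanishes on $K$, and is $1$-periodic.

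Two steps remain genuinely open, and you have correctly identified both as the hard points without closing either. First, the smooth extension of $X$ to $V$ itself (not merely to the blow-up) requires the full inductive Taylor/parity analysis in $(r,x)$ that you only gesture at: showing $a(0)=0$, $V(0)=0$ gives vanishing on $K$ in the blow-up, but smoothness of $a\cos x$, $a\sin x$ and of $V$ as functions of $(y_1,y_2,q)$ needs the higher-order bookkeeping, repeatedly using nondegeneracy of $d\alpha_0$ on $\ker\alpha_0$ at each order. Second, and more substantively, your normalized $\beta$ must remain a binding $1$-form \emph{for the given LOB}, i.e.\ $d(\pi_K^*\alpha_0/|h|)$ must restrict to the prescribed $\omega_x$ on each page near $K$. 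This is not automatic: Lemma~\ref{lem:nearK} normalizes the collar of a single ideal Liouville page, so one needs a version of that normalization performed simultaneously and smoothly in $x$ (equivalently, a choice of tubular neighbourhood of $K$ adapted to all pages at once), after which the convexity of the set of binding $1$-forms associated to $h$ lets the cutoff interpolation go through. Until that parametric normal form is established, the final claim ``$\beta$ can be chosen so that $X$ is $1$-periodic near $K$'' is asserted rather than proved.
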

As we advertised LOB's arise from contact manifolds.
\begin{prop}\label{prop18} (Proposition 18 in \cite{ILD}) Let $(V, \xi)$ be a closed contact
manifold, and $(K, \Theta)$ be a supporting open book with defining function $h: V\rightarrow \mathbb{C}$.
Then the contact forms $\alpha$ on $(V,\xi)$ such that $d(\alpha/|h|)$ induces an ideal Liouville structure
on each page form a non-empty convex cone.
\end{prop}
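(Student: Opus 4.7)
The proposition has two parts: nonemptiness of the set $\mathcal{A}$ of contact forms $\alpha$ on $(V,\xi)$ for which $d(\alpha/|h|)$ induces an ILS on every page, and closure of $\mathcal{A}$ under positive linear combinations. The condition $\alpha\in\mathcal{A}$ splits into three parts: (a) $\alpha$ is a contact form for $\xi$; (b) the restriction $\alpha|_K$ is a positive contact form on $K$, which, with the choice $u=|h|$, yields the ``contact along $K$'' clause of the ILS definition since then $u\cdot(\alpha/|h|)=\alpha$; and (c) $d(\alpha/|h|)$ is symplectic on each interior page $F_x^\circ$.

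For nonemptiness, I would start from any supporting contact form $\alpha_0$ on $(V,\xi)$ and put it in the Giroux normal form in a tubular neighborhood $U\cong\mathbb{D}\times K$ of $K$: in coordinates $(re^{ix},q)$ with $h=re^{ix}$ one may assume $\alpha_0=f(r)\alpha_K+g(r)\,dx$ with $\alpha_K:=\alpha_0|_K$, $f(0)>0$, $g(r)=O(r^2)$, and $f'>0$ enforced by the page symplecticity of $d\alpha_0$. A direct calculation, using that $(d\alpha_K)^n=0$ since $\dim K=2n-1$, yields
\[
(d(\alpha_0/|h|))^n|_{F_x^\circ}\;=\;n\,(f/r)^{n-1}\,(f/r)'\,dr\wedge\alpha_K\wedge(d\alpha_K)^{n-1},
\]
and $(f/r)'=(f'r-f)/r^2$ is nonzero for small $r>0$ because $f(0)\neq 0$. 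Hence the ILS condition holds in a neighborhood of $K$. To propagate it to all of $F_x^\circ$, I would modify $\alpha_0$ by a conformal factor $e^{\phi(|h|)}$ with $\phi$ supported in $\{|h|\geq\delta\}$: this leaves the near-$K$ normal form and conditions (a)--(b) intact, and $\phi$ can be chosen so that the two competing terms in $(d(\alpha/|h|))^n$ never cancel on the compact interior of $F_x$.

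For the convex cone property, let $\alpha_1,\alpha_2\in\mathcal{A}$ and $t_1,t_2>0$; set $\alpha:=t_1\alpha_1+t_2\alpha_2$. Since $\alpha_1,\alpha_2$ both represent the same co-oriented $\xi$, we have $\alpha_2=g\alpha_1$ for a positive smooth function $g$, so $\alpha=(t_1+t_2g)\alpha_1$ is a contact form for $\xi$ and $\alpha|_K=(t_1+t_2g|_K)\alpha_1|_K$ is a positive multiple of $\alpha_1|_K$, hence contact. With $u=|h|$ and $\lambda:=\alpha/|h|$ one has $u\lambda=\alpha$ smooth on $V$ and, by linearity, $d\lambda|_{F_x^\circ}=t_1\,d(\alpha_1/|h|)|_{F_x^\circ}+t_2\,d(\alpha_2/|h|)|_{F_x^\circ}$. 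Near $K$, Lemma~\ref{lem:nearK} provides embeddings $\iota_i:[0,+\infty)\times K\to F_x$ with $\iota_i^*(\alpha_i/|h|)=\alpha_i|_K/r$, so $\lambda$ pulls back to the symplectization Liouville form of $(t_1\alpha_1|_K+t_2\alpha_2|_K)/r$ and is manifestly symplectic there. The main obstacle is nondegeneracy on the compact interior of $F_x$, where positive combinations of symplectic forms can in principle fail to be symplectic; the cleanest route is to find an almost complex structure $J$ on $F_x^\circ$ that is compatible with the Liouville structure of $\omega_\alpha:=d(\alpha/|h|)|_{F_x^\circ}$ for every $\alpha\in\mathcal{A}$ and is tamed by every such $\omega_\alpha$, and then use that positive combinations of $J$-tame 2-forms remain $J$-tame, hence nondegenerate.
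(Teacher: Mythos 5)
This proposition is not proved in the paper at all: it is imported verbatim from Giroux \cite{ILD} (Proposition 18 there), so there is no in-paper argument to compare against and your proposal has to stand on its own. The routine parts of your proposal are fine: the set is obviously a cone, $t_1\alpha_1+t_2\alpha_2=(t_1+t_2g)\alpha_1$ is again a contact form for $\xi$ and restricts to a contact form on $K$, the choice $u=|h|$ handles the boundary clause of the ILS definition, and Lemma~\ref{lem:nearK} controls a collar of $K$. The genuine gap is exactly the point you label ``the main obstacle'': nondegeneracy of $d(\alpha/|h|)$ on the compact part of the pages for a convex combination. Your proposed fix --- a single almost complex structure $J$ on $F_x^\circ$ tamed by $\omega_\alpha=d(\alpha/|h|)|_{TF_x^\circ}$ for \emph{every} $\alpha$ in the set --- is neither constructed nor justified, and it cannot serve as the engine of the proof: the existence of a common taming $J$ for a family of symplectic forms already implies that all their positive combinations are symplectic, so you are assuming a statement at least as strong as the one to be proved. (This is precisely the mechanism that fails for general pairs of symplectic forms in dimension $\geq 4$.) The non-emptiness half has the right flavour but the decisive choice is missing: the unquantified claim that ``$\phi$ can be chosen so that the two competing terms never cancel'' is where the content lies, since the second term has no fixed sign on the interior of the page.

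What actually makes convexity work is that the nondegeneracy condition, though nonlinear in $\alpha$, becomes \emph{linear in the conformal factor} once one element $\alpha_0$ of the set is fixed. Write $\alpha=c\,\alpha_0$ with $c>0$ smooth on $V$, put $\lambda_0=(\alpha_0/|h|)|_{TF_x^\circ}$, $\omega_0=d\lambda_0$, and let $Z_0$ be the Liouville vector field of $\lambda_0$ on the page, $\imath_{Z_0}\omega_0=\lambda_0$. Using $\imath_{Z_0}\omega_0^n=n\,\lambda_0\wedge\omega_0^{n-1}$ and the vanishing of $dc\wedge\omega_0^n$ in degree $2n+1$ on a $2n$-manifold, one gets
\begin{equation*}
\bigl(d(c\lambda_0)\bigr)^n=c^n\,\omega_0^n+n\,c^{n-1}\,dc\wedge\lambda_0\wedge\omega_0^{n-1}=c^{n-1}\bigl(c+dc(Z_0)\bigr)\,\omega_0^n .
\end{equation*}
Hence $c\,\alpha_0$ belongs to the set if and only if $c+Z_0c>0$ on every page (the boundary condition being automatic since $c(u\lambda_0)$ extends and restricts to $c|_K\,\alpha_0|_K$ on $K$). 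Since $t_1\alpha_1+t_2\alpha_2=(t_1c_1+t_2c_2)\,\alpha_0$ and the inequality $c+Z_0c>0$ is linear in $c$, convexity follows immediately; no auxiliary $J$ is needed. The same identity, applied with $c$ a function of $|h|$, is also the correct way to make your non-emptiness step quantitative: it converts the requirement into an explicit first-order differential inequality for the conformal factor, which one then solves (essentially by taking the factor proportional to $|h|$ away from the binding and interpolating in the collar where the supporting form is in normal form).
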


The last ingredient of the uniqueness statement is the following definition. Let $(K,\Theta,\{\omega_x\}_{x\in S^1})$ be a LOB on a closed manifold $V$ with a defining function $h$. A co-oriented contact structure $\xi$ on $V$ is said to be  \emph{symplectically supported} by $(K,\Theta,\{\omega_x\}_{x\in S^1})$ if there exists a contact form $\alpha$ on $(V,\xi)$ such that $\alpha$ is a binding 1-form of the LOB associated to $h$. By our remark following the definition of the binding 1-form, the definition of being symplectically supported is independent of the given defining function. But the crucial fact is that once a defining function is fixed, a contact binding 1-form is unique whenever it exists, see Remark 20 in \cite{ILD}. Hence, once a defining function $h$ is fixed, there is a one-to-one correspondence between contact structures supported by $(K,\Theta,\{\omega_x\}_{x\in S^1})$ and contact binding 1-forms associated to $h$. Using this correspondence one shows the following. 
\begin{prop}\label{prop21}(Proposition 21 in \cite{ILD}) On
a closed manifold, contact structures supported by a given Liouville open book
form a non-empty and weakly contractible subset in the space of all contact structures.
\end{prop}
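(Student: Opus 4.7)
The plan is to fix a defining function $h:V \to \mathbb{C}$ for the open book $(K, \Theta)$ and to exploit the bijection of Remark~20 in~\cite{ILD} between contact structures symplectically supported by the LOB and contact binding $1$-forms associated to $h$. Writing
\begin{equation*}
\mathcal{B}_h := \bigl\{\beta \in \Omega^1(V) : d(\beta/|h|)|_{TF_x^\circ} = \omega_x\text{ for every } x \in S^1\bigr\},\quad \mathcal{A}_h := \{\alpha \in \mathcal{B}_h : \alpha \text{ is contact}\},
\end{equation*}
it suffices to prove that $\mathcal{A}_h$ is non-empty and weakly contractible. Note that $\mathcal{B}_h$ is a convex affine subspace of $\Omega^1(V)$ because the binding condition is affine in $\beta$, that the $1$-form $|h|^2\,d\Theta$ extends smoothly to $V$ (in the coordinates $(re^{ix},q)$ of \ref{ob2} it reads $r^2\,dx = u\,dv - v\,du$), and that its restriction to every page vanishes, so adding any multiple of $|h|^2\,d\Theta$ to an element of $\mathcal{B}_h$ keeps us in $\mathcal{B}_h$.

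The core calculation is that, for $\beta \in \mathcal{B}_h$ and $C \in \mathbb{R}$, setting $\alpha_C := \beta + C\,|h|^2\,d\Theta$ and using the identity $d(|h|^2)\wedge d\Theta \wedge d\Theta = 0$ to kill the would-be $C^2$-term, one obtains
\begin{equation*}
\alpha_C \wedge (d\alpha_C)^n \;=\; \beta \wedge (d\beta)^n \;+\; C\,\mathcal{L}(\beta),
\end{equation*}
where
\begin{equation*}
\mathcal{L}(\beta) := n\,\beta \wedge (d\beta)^{n-1}\wedge d(|h|^2)\wedge d\Theta \;+\; |h|^2\,d\Theta \wedge (d\beta)^n.
\end{equation*}
The main technical claim is that $\mathcal{L}(\beta)$ is a strictly positive volume form on $V$ for every $\beta \in \mathcal{B}_h$. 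Near $K$, every $\beta \in \mathcal{B}_h$ agrees to leading order with the pullback of the contact form $\alpha_K$ of the ideal contact boundary in the coordinates of Lemma~\ref{lem:nearK}, and a direct computation yields $\mathcal{L}(\beta) = 2n\,\alpha_K \wedge (d\alpha_K)^{n-1}\wedge du\wedge dv + O(r)$, matching the contact volume of the standard local model $\alpha_K + \tfrac12(u\,dv - v\,du)$ on $\mathbb{R}^2 \times K$. Away from $K$, the decomposition $d\beta = |h|\,d(\beta/|h|) + |h|^{-1}d|h|\wedge\beta$, together with the transversality of $d\Theta$ to the pages and the fact that the restriction of $d(\beta/|h|)$ to each page is the positive symplectic form $\omega_x$, expresses $\mathcal{L}(\beta)$ as a positive multiple of $\omega_x^n \wedge d\Theta$.

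Granting the positivity of $\mathcal{L}$, non-emptiness of $\mathcal{A}_h$ follows by taking any binding $1$-form $\beta$ provided by the definition of a LOB and choosing $C > 0$ large enough that $\alpha_C$ is contact. For weak contractibility, given a continuous $\sigma : S^k \to \mathcal{A}_h$ and a basepoint $\alpha_\ast \in \mathcal{A}_h$, the family
\begin{equation*}
\bar\sigma(z,t) = (1-t)\sigma(z) + t\alpha_\ast + G(t)\,|h|^2\,d\Theta,\quad (z,t) \in S^k \times [0,1],
\end{equation*}
with continuous $G : [0,1] \to [0,\infty)$ satisfying $G(0) = G(1) = 0$, lies in $\mathcal{B}_h$ throughout. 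Positivity of $\mathcal{L}$ gives, for each $(z,t)$, an upper semi-continuous threshold $G_0(z,t)\geq 0$ above which $\bar\sigma(z,t)$ is contact, with $G_0$ vanishing at $t \in \{0, 1\}$ because $\sigma(z)$ and $\alpha_\ast$ are already contact. Compactness of $S^k$ lets us choose $G$ dominating $\sup_z G_0(\cdot,t)$ while still vanishing at the endpoints, so $\bar\sigma$ descends to the cone $D^{k+1}$ and null-homotopes $\sigma$ inside $\mathcal{A}_h$, giving $\pi_k(\mathcal{A}_h) = 0$ for every $k \geq 0$.

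The main obstacle is establishing the pointwise positivity of $\mathcal{L}(\beta)$ uniformly over the straight-line family $\beta_t(z) = (1-t)\sigma(z) + t\alpha_\ast$, since $\mathcal{L}$ depends nonlinearly on $\beta$. Near $K$ the leading-order computation in the coordinates of Lemma~\ref{lem:nearK} gives positivity essentially independent of $\beta$, and the uniform $C^1$-bound on the compact family $\beta_t(z)$ controls the $O(r)$ corrections. Away from $K$ the formula $\mathcal{L}(\beta) \sim |h|^{n+2}\,\omega_x^n \wedge d\Theta$ already fixes positivity at the order determined by the fixed ILSs $\omega_x$, which is the same for every $\beta \in \mathcal{B}_h$, so the remaining $\beta$-dependent lower-order terms can be absorbed into a sufficiently large choice of the dominating function $G$.
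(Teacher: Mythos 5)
First, a caveat: the paper does not prove this statement — it is quoted verbatim from Giroux's \emph{Ideal Liouville domains} paper and used as a black box (e.g.\ at the end of the proof of Proposition 3.2) — so your proposal can only be judged on its own logic. Your skeleton is sensible and close in spirit to how such statements are attacked: fix $h$, pass to the affine space $\mathcal{B}_h$ of binding forms, note that $\gamma:=|h|^2\,d\Theta$ is smooth on $V$, vanishes on pages, satisfies $(d\gamma)^2=0$ and $\gamma\wedge d\gamma=0$ (so the expansion of $\alpha_C\wedge(d\alpha_C)^n$ is exactly linear in $C$), and cone off a sphere of contact binding forms over $D^{k+1}$ by interpolating and adding a large multiple of $\gamma$.

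However, the ``main technical claim'' that $\mathcal{L}(\beta)>0$ for every $\beta\in\mathcal{B}_h$ is false, and the whole argument rests on it. Away from $K$ write $\rho=|h|$ and $\mu=\beta/\rho$, so that $d\mu|_{TF_x^\circ}=\omega_x$; using $\beta\wedge(d\beta)^{n-1}=\rho^n\,\mu\wedge(d\mu)^{n-1}$ one computes
\begin{equation*}
\mathcal{L}(\beta)\;=\;\rho^{n+2}\,d\Theta\wedge(d\mu)^n\;+\;n\,\rho^{n+1}\,d\Theta\wedge\mu\wedge(d\mu)^{n-1}\wedge d\rho .
\end{equation*}
The first term is the positive multiple of $d\Theta\wedge\omega_x^n$ you describe, but the second term is \emph{not} lower order away from $K$ (there $\rho$ is bounded above and below, so the powers of $\rho$ create no hierarchy), and it depends on the primitive $\mu|_{TF_x}$ itself rather than on $\omega_x$: evaluated on a frame adapted to the fibration it is $n\rho^{n+1}\,\mu|_{TF_x}\wedge\omega_x^{\,n-1}\wedge d\rho|_{TF_x}$, which has no sign. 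Since $\mathcal{B}_h$ is preserved by $\beta\mapsto\beta+N\,|h|\,d(\chi f)$ for any cutoff $\chi$ supported away from $K$, the coefficient of $\mu$ in a direction paired with $d\rho$ can be made arbitrarily large wherever $d\rho|_{TF_x}\neq 0$, producing binding forms with $\mathcal{L}(\beta)<0$ at some point. Your proposed fix — absorbing the bad term ``into a sufficiently large choice of $G$'' — is circular: that term is itself part of the coefficient of $G$, so enlarging $G$ scales it and the good term equally, and if $\mathcal{L}(\beta_t)<0$ somewhere then $\bar\sigma(z,t)$ fails to be contact there for \emph{all} large $G$. Moreover the monotonicity you use at the endpoints (that the set of admissible $G$ is an upper ray $[G_0,\infty)$ with $G_0=0$ when the interpolant is already contact) also presupposes $\mathcal{L}\geq 0$. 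Two further points you leave unaddressed: (i) you need positivity only along the specific segments $(1-t)\sigma(z)+t\alpha_\ast$, but no argument is given that contactness of the endpoints controls $\mathcal{L}$ along the segment; (ii) transferring weak contractibility of $\mathcal{A}_h$ to the space of supported contact structures requires that continuous families of supported contact structures lift continuously to $\mathcal{A}_h$, which uniqueness alone (Remark~20) does not give. The robust route — the one Giroux follows and which Section~3 of this paper imitates for its own interpolation — is to first normalize all the forms near the binding via Lemma~\ref{lem:nearK} and then work in the mapping torus with forms of the controlled shape $dx+s(\lambda+\kappa\lambda_\psi)$ for small $s$, where both the contact condition and the interpolation are verified by hand, finishing with Gray stability; a naive straight-line interpolation in $\mathcal{B}_h$ corrected by $C|h|^2d\Theta$ does not suffice.
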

\section{Proof of the key proposition}
This section is dedicated to the proof of Proposition \ref{keyprop}. Let $n\geq 2$ be fixed and
let $(V,\xi)$ be a co-oriented contact manifold of dimension $2n+1$. By Theorem \ref{thm10}, there is an open book $(K,\Theta)$ in $V$ that supports $\xi$. We fix a defining function $h:V\rightarrow \mathbb{C}$ for $(K,\Theta)$. We put $F_x:=\Theta^{-1}(x)\cup K$ for $x\in S^1=\mathbb{R}/2\pi\mathbb{Z}$ and write $F:=F_0$.

By Proposition \ref{prop18}, there is a contact form $\alpha$ 
on $(V,\xi)$ such that $(K,\Theta, d(\alpha/|h|)_{TF_x^o})$ is a LOB, which supports $\xi$ symplectically. By Lemma \ref{lem15}, we modify the contact binding form $\alpha$ only along $\Theta$ and obtain a binding 1-form  $\hat{\alpha}$, not necessarily contact, such that the associated symplectically spinning vector field $X$ is 1-periodic near $K$. 
Hence the time-one-map of the flow of $X$ gives us  a diffeomorphism 
$\psi:F\rightarrow F$ such that  
\begin{equation}\label{monodromy}
\psi^*(d\lambda)=d\lambda
\end{equation}
where $\lambda\in \Omega^1(F^\circ)$ is the ILF given by 
\begin{eqnarray}\label{lambda}
\lambda:=(\hat{\alpha}/|h|)_{|TF^\circ}=(\alpha/|h|)_{|TF^\circ}
\end{eqnarray}
and $\psi={\rm id}$ on some neighbourhood of $K$ in $F$. We want to recover $V$ as the 
abstract open book induced by the pair $(F,\psi)$ and  define a contact form on the abstract open book with the desired properties. To this end we consider the mapping torus $MT(F,\psi)$ and consequently the abstract open book $OB(F,\psi)$. We postpone the precise collapsing procedure for the moment since it would involve choices of certain coordinates, see \eqref{lambdanearK} and \eqref{absOB}. We note the following identifications
$$MT(F^\circ,\psi)=MT(F,\psi)\setminus \partial MT(F,\psi)=OB(F,\psi)\setminus K.$$

\textbf{A family of contact form away from the binding.} 
On $[0,2\pi]\times F^\circ$, we define a 
family of 1-forms 
\begin{eqnarray}\label{alpha_s}
\alpha_s=dx+s\left(\lambda+\kappa(x)\lambda_\psi\right)
\end{eqnarray}
where $\lambda_\psi:=\psi^*\lambda-\lambda$, $s$ is a positive real parameter and $\kappa:[0,2\pi]\rightarrow [0,1]$ is 
a smooth function such that $\kappa(0)=0$, $\kappa(2\pi)=1$ and ${\rm supp}
(\kappa')\subset (0,2\pi)$\footnote{For us ${\rm supp}(f)$ is the closure of the set on which $f$ does not vanish.}. 
By the choice of $\kappa$, $\alpha_s$ descends to a family of 1-forms on  $MT(F^\circ,\psi)$.
%and on $W\setminus W'$, it reads as
%\begin{eqnarray}\label{alpha_sonW}
%\alpha_s=dx+s\frac{\rho}{r}\alpha_K.
%\end{eqnarray}
We observe the following. 
\begin{lem}\label{contactaway} There exists $s_0>0$, depending on $ \psi,\lambda, \kappa$  such that 
$\alpha_s$ is a contact form on $MT(F^\circ,\psi)$ for all $s\in (0,s_0]$.
\end{lem}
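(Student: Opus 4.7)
My plan is a direct computation of $\alpha_s\wedge(d\alpha_s)^n$. I expect the result to be $s^n$ times $dx\wedge(d\lambda)^n$ plus a correction of order $s^{n+1}$, and smallness of $s$ will kill the correction provided that correction can be controlled uniformly on $F^\circ$.

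First I would observe that $\lambda_\psi=\psi^*\lambda-\lambda$ is closed: by \eqref{monodromy}, $d\lambda_\psi=\psi^*(d\lambda)-d\lambda=0$. Hence $d\alpha_s = s\,d\lambda + s\kappa'(x)\,dx\wedge\lambda_\psi$. Raising to the $n$-th power, every cross term containing two copies of the 2-form $dx\wedge\lambda_\psi$ vanishes (from $dx\wedge dx=0$ or $\lambda_\psi\wedge\lambda_\psi=0$), so only the first two binomial terms survive. When I wedge with $\alpha_s=dx+s\lambda+s\kappa(x)\lambda_\psi$, the summands of shape $\lambda\wedge(d\lambda)^n$ and $\lambda_\psi\wedge(d\lambda)^n$ also die by dimension counting on the $2n$-dimensional $F^\circ$. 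Collecting what is left and writing $\lambda\wedge\lambda_\psi\wedge(d\lambda)^{n-1}=g\cdot(d\lambda)^n$ for a smooth function $g$ on $F^\circ$ yields
\[
\alpha_s\wedge(d\alpha_s)^n \;=\; s^n\bigl(1-sn\kappa'(x)\,g\bigr)\,dx\wedge(d\lambda)^n.
\]

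The main obstacle is bounding $g$: both $\lambda$ and $(d\lambda)^n$ blow up as one approaches $K$, so nothing a priori prevents $g$ from being unbounded there. This is exactly where the passage from $\alpha$ to $\hat\alpha$ via Lemma \ref{lem15} pays off: since $\psi={\rm id}$ on some neighbourhood $U$ of $K$ in $F$, the 1-form $\lambda_\psi$ vanishes on $U\cap F^\circ$, and so does $g$. Thus $\mathrm{supp}(g)$ lies in the compact set $F\setminus U\subset F^\circ$, producing a uniform bound $|g|\leq G$. Together with $|\kappa'|\leq K_1$ on $[0,2\pi]$, any $s_0\leq 1/(2nK_1G)$ (or any $s_0>0$ if $G=0$) forces $1-sn\kappa'(x)g\geq 1/2>0$ for $s\in(0,s_0]$, so $\alpha_s\wedge(d\alpha_s)^n$ is nowhere vanishing. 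The boundary conditions on $\kappa$ guarantee that $\alpha_s$ descends to $MT(F^\circ,\psi)$, where it is then a contact form.
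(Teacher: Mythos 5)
Your proposal is correct and follows essentially the same route as the paper: expand $\alpha_s\wedge(d\alpha_s)^n$ using $d\lambda_\psi=0$, observe that the only surviving correction term involves $\lambda_\psi$ and $\kappa'$ and is therefore compactly supported in $MT(F^\circ,\psi)$ (because $\psi={\rm id}$ near $K$), and take $s$ small enough that the leading term $s^n\,dx\wedge(d\lambda)^n$ dominates. Your explicit bookkeeping with the function $g$ and the bound $s_0\leq 1/(2nK_1G)$ is just a more quantitative phrasing of the paper's compact-support argument.
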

\begin{proof} Since $d\lambda_\psi=0$, we get
$$d\alpha_s
=s\left(\kappa'dx\wedge \lambda_\psi+d\lambda\right)$$
$$\Rightarrow\;(d\alpha_s)^n=s^n\left((n-1)\kappa'dx\wedge \lambda_\psi\wedge (d\lambda)^{n-1}+(d\lambda)^n\right)$$
and 
$$\alpha_s\wedge (d\alpha_s)^{n}
=\left[dx+s\left(\lambda+\kappa\lambda_\psi\right)\right]\wedge s^n\left[(n-1)\kappa'dx\wedge \lambda_\psi\wedge (d\lambda)^{n-1}+(d\lambda)^n\right]$$
$$\Rightarrow\;\frac{\alpha_s\wedge (d\alpha_s)^{n}}{s^n}=dx\wedge (d\lambda)^n+s\kappa'\lambda \wedge dx\wedge \lambda_\psi\wedge (d\lambda)^{n-1}.$$
Note that $dx\wedge (d\lambda)^n$ is a volume form and  the top degree form $\lambda \wedge dx\wedge \lambda_\psi\wedge (d\lambda)^{n-1}$ is compactly supported in $MT(F^\circ,\psi)$.  Hence there exists $s_0>0$ such that the right hand side of the above equation is positive volume form for all $s\in(0,s_0]$.
\end{proof}

Now we want to understand the Reeb vector field $R_{\alpha_s}$ of $\alpha_s$ on $MT(F^\circ,\psi)$. To this end we define 
the vector field $Y$ on $F$ via 
$$\imath_{Y}d\lambda=\lambda_\psi.$$
Note that $Y$ is compactly supported and $\kappa'Y$ is a well-defined compactly supported vector field on $MT(F^\circ,\psi)$ since $\kappa'$ is compactly supported in $(0,2\pi)$. We compute 
\begin{eqnarray*}
\imath_{(\partial_x-\kappa'Y)} d \alpha_s
 &=& s  \left( \imath_{(\partial_x-\kappa'Y)} \kappa' dx \wedge \lambda_\psi + \imath_{(\partial_x-\kappa'Y)} d \lambda \right) \\
 &=& s \, \kappa' \left( \lambda_\psi +\kappa'\lambda_\psi (Y) \, dx\right) - s\kappa'\imath_{Y} d \lambda \\
 &=&s\kappa'\lambda_\psi-s\kappa'\lambda_\psi \\
 &=&0
\end{eqnarray*}
and 
$$\alpha_s(\partial_x-\kappa'Y)=1-s\kappa'\lambda(Y).$$
Hence on $MT(F^\circ,\psi)$, the Reeb vector field of $\alpha_s$ reads as
\begin{eqnarray}\label{ReebsonW'comp}
 R_{\alpha_s}=\frac{\partial_x-\kappa'Y}{1-s\kappa'\lambda(Y)}.
\end{eqnarray}
We note that  $R_{\alpha_s}=\partial_x$ near $K$. Since the $\partial_x$ component of $R_{\alpha_s}$  never vanishes and  $\kappa Y$ is tangent to the pages, $R_{\alpha_s}$ is transverse to $F^\circ\times \{x\}$ for all $x$. Hence $F^\circ$ is a global hypersurface of sections for $R_{\alpha_s}$ on  $MT(F^\circ,\psi)$ in the sense that $F^\circ$ is a properly embedded hypersurface in $MT(F^\circ,\psi)$ and the properties given by \ref{gss2} and \ref{gss3} hold. Consequently we have the first return time   
\begin{eqnarray}\label{returntimeaway}
\tau_s:F^\circ\rightarrow \mathbb{R},\; \;\tau_s(p)=\inf \{t>0\;|\; \phi_{R_{\alpha_s}}^t(0,p)\in \{0\}\times F^\circ\}
\end{eqnarray}
and the first return map 
\begin{eqnarray}\label{returnmapaway}
\Upsilon: F^\circ\rightarrow F^\circ;\;\; (0,\Upsilon(p))=\phi_{R_{\alpha_s}}^{\tau_s(p)}(0,p), \;\forall p\in F^\circ.
\end{eqnarray}
\begin{rmk}\label{returnmapindependent}
We note that since $R_{\alpha_s}$ is multiple of the vector field $\partial_x-\kappa'Y$ and latter is independent of $s$. Hence the return map $\Upsilon$ is independent of $s$, which justifies the absence of the subscript in (\ref{returnmapaway}).
\end{rmk}
We note that for all $s\in(0,s_0]$
\begin{eqnarray}\label{tau_sh_snearK}
\tau_s\equiv 2\pi,\;\;\Upsilon={\rm id}\;\;{\rm on}\;\; F\setminus {\rm supp}\,(\psi).
\end{eqnarray}
\begin{lem}\label{tauboundedawayK} There exists $s_1<s_0$ depending on $\psi,\lambda$ and $\kappa$ such that for all $s\in(0,s_1]$,
\begin{eqnarray}
 \pi\leq \tau_s\leq 4\pi
\end{eqnarray}
on $F^\circ$.
\end{lem}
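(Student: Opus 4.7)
The plan is to exploit the explicit formula \eqref{ReebsonW'comp} for $R_{\alpha_s}$ by reparametrizing each Reeb orbit in $MT(F^\circ,\psi)$ by the $x$-coordinate. Since the numerator $\partial_x-\kappa'Y$ is the $s$-independent vector field noted in Remark~\ref{returnmapindependent} and the denominator $1-s\kappa'\lambda(Y)$ is positive for $s\in(0,s_0]$, the function $t\mapsto x(t)$ is strictly increasing along every orbit. Writing the orbit through $(0,p)$ as $(x,q(x;p))$, we get
\[
\frac{dq}{dx}=-\kappa'(x)\,Y(q),\qquad \frac{dt}{dx}=1-s\kappa'(x)\,\lambda(Y)(q),
\]
and the first return to $\{0\}\times F^\circ$ happens exactly when $x$ has increased by $2\pi$. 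Integrating the second relation yields the closed formula
\[
\tau_s(p)=2\pi - s\int_0^{2\pi}\kappa'(x)\,\lambda(Y)(q(x;p))\,dx.
\]

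The heart of the argument is to bound this integral uniformly in $p$. The key observation will be that $Y$ is compactly supported in $F^\circ$. Indeed, by the construction preceding \eqref{lambda} combined with Lemma~\ref{lem15}, the monodromy $\psi$ equals the identity on a neighbourhood $U$ of $K$ in $F$. Hence $\lambda_\psi=\psi^*\lambda-\lambda$ vanishes on $U$, and the nondegeneracy of $d\lambda$ on $F^\circ$ forces $Y\equiv 0$ on $U$. Consequently $\mathrm{supp}(Y)$ is a compact subset of $F^\circ$, on which $\lambda$ is smooth, so $M:=\sup_{F^\circ}|\lambda(Y)|<\infty$. With $N:=\|\kappa'\|_{C^0}$, the pointwise bound $|\kappa'(x)\lambda(Y)(q(x;p))|\le MN$ gives
\[
|\tau_s(p)-2\pi|\le 2\pi\, s\, M\, N\qquad\text{for all } p\in F^\circ.
\]

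To finish I would take any $s_1$ strictly smaller than $\min\{s_0,\,(2MN)^{-1}\}$, a constant depending only on $\psi,\lambda,\kappa$. For every $s\in(0,s_1]$ the estimate above forces $\pi\le \tau_s\le 3\pi\le 4\pi$, as required. I do not foresee a serious obstacle here; the only subtle point is verifying the compact support of $Y$, which reduces to the careful choice, guaranteed by Lemma~\ref{lem15}, of a symplectically spinning vector field whose flow is $1$-periodic near $K$.
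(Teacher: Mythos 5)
Your proof is correct and follows essentially the same route as the paper: the paper also observes that $dx(R_{\alpha_s})=\bigl(1-s\kappa'\lambda(Y)\bigr)^{-1}$ converges uniformly to $1$ as $s\to 0$ because $Y$ is compactly supported, and concludes that $\tau_s\to 2\pi$ uniformly. Your version merely makes this quantitative by integrating $dt/dx$ over one period to get the explicit formula for $\tau_s$ and an explicit admissible $s_1$, which is a welcome sharpening but not a different argument.
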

\begin{proof} We have
$$dx(R_{\alpha_s})=\frac{1}{1-s\kappa'\lambda(Y)},$$
which converges to 1 uniformly as  $s\rightarrow 0$. This follows from the fact that $Y$ is compactly supported. Then $\tau_s$ converges uniformly to $2\pi$ and the statement follows.
\end{proof}
\textbf{A family of contact forms near the binding.}
By our assumption and after a suitable re-scaling, we know that for any $c>0$ there exists a contact form $\sigma_c$ on $(K, \xi|_K)$ such that 
\begin{eqnarray}\label{sigma_epsilon}
{\rm vol}(K,\sigma_c)\leq c\;\;{\rm  and}\;\; T_{{\rm min }}(\sigma_c)\geq \pi.
\end{eqnarray}
Given $\sigma_c$, by Lemma \ref{lem:nearK} there is an embedding 
\begin{eqnarray}\label{lambdanearK}
\imath_c:[0,+\infty)\times K\hookrightarrow F\;\;{\rm s.t.}\;\;\imath_c^* \lambda =\frac{1}{r}\sigma_c
\end{eqnarray}
and there exists $r_c>0$ depending only on $\psi$ and $\sigma_c$ such that 
\begin{equation}\label{psi=id}
\imath_c \left([0,r_c]\times K\right)\cap \,{\rm supp}(\psi)=\emptyset.
\end{equation}
We define
\begin{eqnarray}\label{F_eepsilon}
F_c:=F\setminus \imath_c\left([0,r_c)\times K\right)
\end{eqnarray}
and note that near the boundary of $MT(F_c,\psi)$, (\ref{alpha_s}) reads as
\begin{eqnarray}\label{alpha_snearendofF_epsilon}
\alpha_s=dx+\frac{s}{r}\sigma_c.
\end{eqnarray}
We now want to extend the coefficients of \eqref{alpha_snearendofF_epsilon} to a neighbourhood of $K$ in $OB(F,\psi)$.
\begin{lem} \label{fandg} 
Given $a\in (0,1/2)$, there exist smooth functions 
$$
f,g : [0,1] \rightarrow [0,1]
$$ 
with the following properties. 
\end{lem}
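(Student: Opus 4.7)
The plan is to construct $f$ and $g$ piecewise on $[0,1]$ with three distinguished regions governed by the parameter $a$. Based on the surrounding construction, the expected role of $f$ and $g$ is to serve as the coefficients of a contact form $\alpha=f(r)\,dx+g(r)\,\sigma_c$ on the solid torus $\mathbb{D}\times K$ obtained from the collapsing map \eqref{eq:collapse}, where $r\in[0,1]$ is the (rescaled) radial coordinate. Near $r=0$ I would force $f$ to vanish to second order, e.g.\ $f(r)=c_0 r^2$ on $[0,a]$ for some $c_0>0$, and take $g$ to be a smooth even function with $g(0)>0$. This ensures the ansatz extends $C^\infty$-smoothly across the binding core, because in Cartesian coordinates on the disk one has $r^2\,dx=x_1\,dx_2-x_2\,dx_1$ and $g(r)$ is a smooth even function of $\sqrt{x_1^2+x_2^2}$.

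Near $r=1$ I would set $f$ and $g$ to the constant values dictated by the matching condition with the mapping-torus side, where \eqref{alpha_snearendofF_epsilon} reads $\alpha_s=dx+(s/r_c)\sigma_c$, so that the two pieces glue without any further modification. On the interpolation region $[a,1-a]$ I would connect the prescribed boundary behaviors by a smooth monotone transition, the delicate point being the contact positivity. For the ansatz $f(r)\,dx+g(r)\,\sigma_c$, a direct calculation yields
$$
\alpha\wedge(d\alpha)^n=n\,g(r)^{n-1}\bigl(f'(r)g(r)-f(r)g'(r)\bigr)\,dr\wedge dx\wedge\sigma_c\wedge(d\sigma_c)^{n-1},
$$
so one needs $f'g-fg'>0$ on $(0,1]$ (with a controlled linear vanishing at $r=0$ to compensate the Jacobian $r$ of polar coordinates). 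Geometrically this is the requirement that the planar curve $r\mapsto(f(r),g(r))$ in $[0,1]^2$ has strictly monotone angular coordinate as seen from the origin. I would realize it by drawing an explicit smooth path in the square between the prescribed endpoint values with strictly increasing angle, then reparametrizing by $r$ with flat jets at $r=a$ and $r=1-a$ to guarantee $C^\infty$-gluing with the prescribed parts.

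The main obstacle I anticipate is verifying the contact positivity uniformly in $r$ through the degenerate endpoint $r=0$, where the numerator $f'g-fg'$ vanishes linearly because $f(0)=f'(0)=0$. One must check that the quotient $(f'g-fg')/r$ extends to a positive value at $r=0$, which for the choice $f(r)=c_0 r^2$ reduces to the positivity $2c_0 g(0)>0$, automatic from the construction. Once this is in hand, the remaining properties that the lemma certainly records, namely the prescribed values of $f,g$ on $[0,a]$ and $[1-a,1]$, the pointwise bounds $f,g\in[0,1]$, and the global contact positivity, are direct consequences of the explicit piecewise construction and valid for every $a\in(0,1/2)$. The interpolation region having width at least $1-2a>0$ is what makes the smooth reparametrization possible without destroying the monotone-angle condition.
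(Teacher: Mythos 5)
The lemma is stated without proof in the paper (the construction is left to the reader, with the footnote hint that one may take $f=1-g$ on $[0,1/2]$), so an explicit piecewise construction is indeed the intended genre of argument. But your proposal aims at the wrong target. The properties the lemma actually asserts are the items \ref{f1}--\ref{f'}, \ref{g1}--\ref{g'} and \ref{fg} listed directly after the statement: besides the prescribed germs at $y=0$ and $y=1$, they are quantitative normalizations --- $f(1/2)=g(1/2)=1/2$, the derivative bounds $-2\le f'<0$ and $0\le g'\le 2$, and the ratio condition $g'/f'\le -1$ on $(0,1/2]$. None of these is ``contact positivity''; positivity of $\alpha_{s,c}$ is a separate, later statement (Lemma \ref{contactnear}) deduced from them, and the midpoint values, derivative bounds and ratio condition are precisely what later yield the return-time bound \eqref{tau_scbounded} and the volume estimate \eqref{volumebound}. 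A construction that only guarantees $f'g-fg'>0$ establishes none of this, so as written your argument proves a different (and for the paper's purposes insufficient) statement.

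Two further concrete slips. First, the matching condition at $y=1$ is not a pair of constants: \eqref{alpha_snearendofF_epsilon} reads $\alpha_s=dx+\frac{s}{r}\,\sigma_c$ with $r$, not $r_c$, in the denominator, so the coefficient of $\sigma_c$ must equal $a/y$ near $y=1$ (this is property \ref{f1}); gluing to constants would be merely continuous, not smooth, across $r=r_c$. Second (cosmetic, but worth flagging), the paper's ansatz \eqref{alpha_sepsnearK} is $g_{s,c}\,dx+f_{s,c}\,\sigma_c$, i.e.\ your $f$ and $g$ are the paper's $g$ and $f$; under that swap your positivity computation agrees with the paper's $h_{s,c}=f_{s,c}g'_{s,c}-f'_{s,c}g_{s,c}>0$, and your observation about the second-order vanishing of the $dx$-coefficient at the core is exactly \ref{g2}. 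Once the correct list of properties is in view, the footnote's choice $f=1-g$ on $[0,1/2]$ together with $f=a/y$ and $g=1$ near $y=1$ makes all of them immediate, which is presumably why the paper omits the proof.
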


\begin{enumerate}[label=\mbox{(f\arabic*)}]
\item \label{f1}  {\it $f(y)=a/y$ near $y=1$.} 
\smallskip
\item \label{f2}  {\it  $f(r)=1-y/2$ near $y=0$.}
\smallskip
\item \label{f3} $f(1/2)=1/2$. 
\smallskip
\item \label{f'}  $-2 \leq f'<0 $ on $(0,1]$.
\end{enumerate}
\begin{enumerate}[label=\mbox{(g\arabic*)}]
\item \label{g1}   {\it $g=1$ near $y=1$.}

\smallskip
\item \label{g2}   {\it  $g(y)=y^2/2$ near $y=0$.}

\smallskip
\item\label{g3} $g(1/2)=1/2$

\smallskip
\item \label{g'}  {\it $0\leq g'\leq 2$ on $[0,1]$ and $0<g'$ on $(0,1/2]$.} 
\end{enumerate}
\begin{enumerate}[label=\mbox{(fg)}]
\item\label{fg} {\it $g'/f'\leq -1$ on $(0,1/2]$.}
\end{enumerate}
\noindent 
We leave the proof the lemma to the reader\footnote{In fact one can take $f=1-g$ on $[0,1/2]$ and get $g'/f'=-1$ in \ref{fg}.}. 

Given $r_c$ as above and any $s\in(0,r_c/2)$, we apply the above lemma for $a=s/r_c$ and get the functions $f$ and $g$. We define
1-form 
\begin{eqnarray}\label{alpha_sepsnearK}
\alpha_{s,c} (x,r,q) = g_{s,c}(r)\, dx + f_{s,c}(r)\,  \sigma_c (q)
\end{eqnarray}
on $[0,r_c] \times S^1 \times K$, where
\begin{eqnarray}\label{f_sc}
f_{s,c}:[0,r_c]\rightarrow [0,1],\;\; f_{s,c}(r):=f(r/r_c),
\end{eqnarray}
and
\begin{eqnarray}\label{g_sc}
g_{s,c}:[0,r_c]\rightarrow [0,1],\;\; g_{s,c}(r):=g(r/r_c).
\end{eqnarray}
We note that by \ref{f2} and \ref{g2}
$$\alpha_{s,c}=\frac{r^2}{2r_c^2}dx+\big(1-\frac{r^2}{2r_c^2}\big)\sigma_c$$
near $r=0$ and therefore $\alpha_{s, c}$ descends to a  smooth 1-form on $ r_c \mathbb{D} \times K$.
\begin{lem} \label{contactnear} 
For the above choices, $\alpha_{s, c}$ is a contact form on $r_c \mathbb{D} \times K$.
\end{lem}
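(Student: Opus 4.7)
The plan is to verify the contact condition $\alpha_{s,c}\wedge(d\alpha_{s,c})^n > 0$ by a direct computation. I will treat the open region $r > 0$ in the cylindrical coordinates $(r,x,q)\in(0,r_c]\times S^1\times K$ first and then check smoothness and positivity at $r = 0$ using Cartesian coordinates on $r_c\mathbb{D}$.

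Writing $f:=f_{s,c}$ and $g:=g_{s,c}$ for brevity, we have $d\alpha_{s,c} = g'\,dr\wedge dx + f'\,dr\wedge \sigma_c + f\,d\sigma_c$. Both 2-forms containing $dr$ square to zero and multiply to zero, $\sigma_c\wedge\sigma_c = 0$, and $(d\sigma_c)^n = 0$ since $K$ has dimension $2n-1$. Consequently the multinomial expansion collapses to
$$(d\alpha_{s,c})^n = nf^{n-1}\bigl(g'\,dr\wedge dx + f'\,dr\wedge\sigma_c\bigr)\wedge (d\sigma_c)^{n-1},$$
and wedging with $\alpha_{s,c} = g\,dx + f\,\sigma_c$ (discarding the remaining vanishing terms) gives
$$\alpha_{s,c}\wedge(d\alpha_{s,c})^n = nf^{n-1}\bigl(fg' - f'g\bigr)\,dr\wedge dx\wedge \sigma_c\wedge(d\sigma_c)^{n-1}.$$
The factor $\sigma_c\wedge(d\sigma_c)^{n-1}$ is the positive contact volume form on $K$. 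By \ref{f1} and \ref{f'}, $f$ is strictly decreasing from $1$ to $a > 0$ on $[0,r_c]$, so $f^{n-1} > 0$ throughout. By \ref{g'} and \ref{g2}, $g \geq 0$ with $g(0) = 0$, so $-f'g \geq 0$; and $g' \geq 0$, so $fg' \geq 0$. Their sum vanishes only where both $g$ and $g'$ vanish, which by \ref{g2} forces $r = 0$. Hence $fg'-f'g > 0$ on $(0,r_c]$, which gives the contact condition for $r > 0$ (the coordinate change $dr\wedge dx = (1/r)\,du\wedge dv$ absorbs its factor into the positive coefficient).

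The only subtlety is at $r = 0$, where the cylindrical frame is singular but $\alpha_{s,c}$ extends smoothly via $(r,x,q)\mapsto(re^{ix},q)$. By \ref{f2} and \ref{g2}, near $r = 0$ we have $f = 1 - r^2/(2r_c^2)$ and $g = r^2/(2r_c^2)$, so the identity $r^2\,dx = u\,dv - v\,du$ with $u=r\cos x$, $v=r\sin x$ yields
$$\alpha_{s,c} = \tfrac{1}{2r_c^2}(u\,dv - v\,du) + \rho\,\sigma_c,\qquad \rho := 1 - \tfrac{u^2+v^2}{2r_c^2},$$
which is smooth on a neighbourhood of $\{0\}\times K$. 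Re-running the same cancellations in $(u,v,q)$ (the roles of $dr\wedge dx$ and $dr\wedge\sigma_c$ are now played by $du\wedge dv$ and $(u\,du+v\,dv)\wedge\sigma_c$, and the only potential cross term $du\wedge dv\wedge(u\,du+v\,dv)$ vanishes outright) gives
$$\alpha_{s,c}\wedge(d\alpha_{s,c})^n = \tfrac{n\rho^{n-1}}{r_c^2}\,du\wedge dv\wedge \sigma_c\wedge(d\sigma_c)^{n-1},$$
which is a positive volume form at $r = 0$ since $\rho(0) = 1$. Combining the two regions proves the claim. The only real obstacle is bookkeeping, namely confirming that the apparent degeneracy $fg'-f'g \to 0$ at $r = 0$ in cylindrical coordinates is precisely the polar Jacobian in disguise, which is exactly what the Cartesian computation certifies.
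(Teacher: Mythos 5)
Your proof is correct and follows essentially the same route as the paper: expand $\alpha_{s,c}\wedge(d\alpha_{s,c})^n=n f^{n-1}(fg'-f'g)\,dr\wedge dx\wedge\sigma_c\wedge(d\sigma_c)^{n-1}$, deduce positivity on $(0,r_c]$ from the sign conditions \ref{f'}, \ref{g2}, \ref{g'} (the paper writes this as $h_{s,c}\geq -f'_{s,c}g_{s,c}>0$), and check $r=0$ separately using the quadratic normal forms \ref{f2}, \ref{g2}, where $r\,dr\wedge dx=du\wedge dv$. Your Cartesian bookkeeping at $r=0$ is slightly more explicit than the paper's (and your final displayed coefficient $n\rho^{n-1}/r_c^2$ is correct, although the cross term $(u\,dv-v\,du)\wedge d\rho\wedge\sigma_c$ does not actually vanish — it contributes $r^2/(2r_c^4)$, which cancels against the $\rho/r_c^2$ term to give exactly $1/r_c^2$; in any case only the value at $u=v=0$ is needed there).
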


\begin{proof} 
We compute
\begin{eqnarray*}
\alpha_{s, c} \wedge (d\alpha_{s, c})^n
&=& (g_{s, c} \, dx + f_{s, c}  \,\sigma_ c) \wedge (g'_{s, c} \,dr \wedge dx + f_{s, c}'\, dr \wedge \sigma_ c + f_{s, c} \, d\sigma_ c)^n 
     \notag \\
&=&n\,h_{s, c}\, f_{s, c}^{n-1} \big( dr \wedge dx \wedge \sigma_ c \wedge (d \sigma_ c)^{n-1} \big), 
\end{eqnarray*}
where
\begin{eqnarray}\label{h_sc}
h_{s, c}:=f_{s, c}g'_{s, c}-f'_{s, c}g_{s, c}.
\end{eqnarray}
Note that by \ref{f1} and \ref{f'}, $f_{s, c}>0$ on $[0,r_c]$ and by \ref{f'}, \ref{g2} and \ref{g'}
$$h_{s, c}\geq -f'_{s, c}g_{s, c}>0$$
on $(0,r_c]$. Therefore, $\alpha_{s, c}$ is a contact form away from~$K$. 
Near $K$ we have 
$$h_{s, c}=\frac{r}{r_c^2}$$ and hence $\alpha_{s, c} \wedge (d\alpha_{s, c})^{n}$ reads 
$$
\frac{n}{r_c^2}\,\left(1-\frac{r^2}{2r_c^2}\right)^{n-1}\bigl( r dr \wedge dx \wedge \sigma_ c \wedge (d \sigma_ c)^{n-1} \bigr) ,
$$
which is a positive volume form at any point on $K$.
\end{proof}

An easy computation shows that away from K, the Reeb vector field $R_{s, c}$ of $\alpha_{s, c}$ reads as
\begin{equation}\label{reebnearK}
R_{s, c}(x,r,q)=-\frac{f_{s, c}'}{h_{s, c}}\partial_x+\frac{g_{s, c}'}{h_{s, c}}R_{c}(q)    
\end{equation}
and has the flow
\begin{equation}\label{reebflownearK}
\phi^t_{s, c}(x,r,q)=\left( x-\frac{f_{s, c}'(r)}{h_{s, c}(r)}t,\,r,\, \phi_{ c}^{\frac{g_{s, c}'(r)t}{
h_{s, c}(r)}}(q)\right),    
\end{equation}
where $\phi_{c}^t$ is the flow of the Reeb vector field $R_{c}$ of $\sigma_c$ on $K$.
For $(0,q)\in r_c\mathbb{D}\times K$, we have
\begin{equation}\label{reebandflowonK}
R_{s, c}(0,q)=R_{c}(q),\;\phi^t_{R_{s, c}}(0,q)=\left(0,\phi_{c}^{t}(q)\right).
\end{equation}
\\
\textbf{A family of contact forms on $OB(F,\psi)$.}
Given $ c>0$ and $s\in(0,\min\{s_1,r_ c/2\})$, we define, abusing the notation,
\begin{eqnarray}\label{alpha_epseps}
\alpha_{s, c}=   \left\{
\begin{array}{ll}
      \alpha_s &\textrm{ on } MT(F_{ c},\psi)\\
      \\
      g_{s,c}dx+f_{s,c}\sigma_ c &\textrm{ on } r_ c\mathbb{D}\times K\\
\end{array} 
\right.
\end{eqnarray}
on the abstract open book
\begin{equation}\label{absOB}
OB(F,\psi)=MT(F_{ c},\psi)\cup \left(r_ c\mathbb{D}\times K\right)
\end{equation} 
where $\alpha_s$ is defined by (\ref{alpha_s}) and $f_{s,c}$ and $g_{s,c}$ are given by \eqref{f_sc} and \eqref{g_sc}. By (\ref{alpha_snearendofF_epsilon}) and the properties \ref{f1} and \ref{g1}, $\alpha_{s, c}$ is a well-defined contact form on $OB(F,\psi)$. 

By the construction of $\alpha_s$ and \ref{f'}, $F$ is a global surface of section for the Reeb flow of $\alpha_{s,c}$ with the first return time
\begin{eqnarray}\label{tau_sc}
\tau_{s,c}:F^\circ\rightarrow (0,\infty).
\end{eqnarray}
We note that $\tau_{s,c}=\tau_s$ on $F_c$ and since $s<s_1$ Lemma \ref{tauboundedawayK} implies that 
\begin{eqnarray}\label{tau_scboundawayK}
\tau_{s,c}\geq \pi \;\;{\rm on }\;\; F_c.
\end{eqnarray}
By \eqref{reebflownearK} we have
$$\tau_{s,c}(r,q)=-2\pi\,\frac{h_{s,c}(r)}{f'_{s,c}(r)}=2\pi\left(g_{s,c}(r)-f_{s,c}(r)\,\frac{g'_{s,c}(r)}{f'_{s,c}(r)}\right)$$
on $(0,r_c)\times K$. On $(0,r_c/2)\times K$ we have
$$\tau_{s,c}\geq 2\pi\,f_{s,c}\left(-\frac{g'_{s,c}}{f'_{s,c}}\right)\geq \pi\left(-\frac{g'_{s,c}}{f'_{s,c}}\right)\geq \pi$$
where the second inequality follows from \ref{f3} and \ref{f'} and the last inequality follows from \ref{fg}. On $[r_c/2,r_c)\times K$ we have
$$\tau_{s,c}\geq 2\pi\,g_{s,c}\geq \pi$$
where the last inequality follows from \ref{g3} and \ref{g'}. Combining these observations with \eqref{tau_scboundawayK} we conclude that
\begin{eqnarray}\label{tau_scbounded}
\tau_{s,c}\geq \pi\;\; {\rm on}\;\; F^\circ.
\end{eqnarray}
Moreover by \eqref{reebandflowonK} we also get 
\begin{eqnarray}\label{Tminbounded}
{\rm T_{min}}(\alpha_{s,c}|K)={\rm T_{min}}(\sigma_c)\geq \pi.
\end{eqnarray}

We estimate the volume of $OB(F,\psi)$ with respect to $\alpha_{s, c}$ as follows. 
$$\int_{OB(F,\psi)}\alpha_{s, c}\wedge (d\alpha_{s, c})^n=\int_{MT(F_{ c},\psi)}\alpha_{s}\wedge (d\alpha_{s})^n+
\int_{r_ c\mathbb{D}\times K}\alpha_{s, c}\wedge (d\alpha_{s, c})^n.$$
Since $s<s_1$, we have 
$$\int_{MT(F_ c,\psi)}\alpha_{s}\wedge (d\alpha_{s})^n=\int_{F_{ c}}\tau_s\,(d\alpha_s|_{\{0\}\times F_ c})^n=\int_{F_{ c}}\tau_s\,s^n(d\lambda)^n\leq 4\pi s^n\int_{F_{ c}}(d\lambda)^n,$$
where the inequality follows from Lemma \ref{tauboundedawayK}. For the second term we have
\begin{eqnarray*}
\int_{r_ c\mathbb{D}\times K}\alpha_{s, c}\wedge (d\alpha_{s, c})^n
&=& \int_{r_ c\mathbb{D}\times K}n h_{s, c}f_{s, c}^{n-1}\big( dr\wedge dx\wedge\sigma_ c \wedge (d\sigma_ c)^{n-1}\big)\\
&=&2\pi n\,{\rm vol}(K,\sigma_ c)\int_0^{r_ c} h_{s, c}f_{s, c}^{n-1} dr\\
&\leq & 2\pi n c \int_0^{r_ c} h_{s, c} dr\\
&\leq & 2\pi n c \int_0^{r_ c} \frac{4}{r_ c} dr\\
&=&8\pi n c. 
\end{eqnarray*}
where the last inequality follows from \ref{f'}, \ref{g'} and   
$$h_{s, c}\leq g'_{s, c}-f'_{s, c}.$$
Hence for $s\in(0,\min\{s_1,r_ c/2\})$ we get
\begin{equation}\label{volumebound}
   {\rm vol}(\alpha_{s, c})\leq 4\pi s^n\int_{F_{ c}}(d\lambda)^n+8\pi n c. 
\end{equation}
Recall that $s_1$ depends only on $\lambda, \psi$ and $\kappa$. Now given any $\varepsilon>0$, we choose $c>0$ such that 
\begin{equation}\label{halfvolume}
8\pi nc\leq \varepsilon/2.
\end{equation} Once $c$ is given, we choose $\sigma_c$ and consequently $r_ c$ and $\int_{F_{ c}}(d\lambda)^n$ are fixed. Then we take $s>0$ such that 
$$s <\min \left\{s_1,\frac{r_ c}{2},
\left(\frac{ \varepsilon}{8\pi\int_{F_{ c}}(d\lambda)^n}
\right)^{\frac{1}{n}}\right\}.$$ 
Since $s<s_1$ and $s<\left(\frac{\varepsilon}{8\pi\int_{F_{ c}}(d\lambda)^n}
\right)^{\frac{1}{n}}$, we have
$$\int_{MT(F_ c,\psi)}\alpha_{s}\wedge (d\alpha_{s})^n\leq \varepsilon/2$$
and together with \eqref{volumebound} and \eqref{halfvolume} we get
$${\rm vol}(\alpha_{s, c})\leq\varepsilon.$$

We showed that for any $\varepsilon>0$ there exists a contact form $\alpha_{s,c}$ on $OB(F,\psi)$ that after a rescaling, fulfils the desired properties in Theorem \ref{maingss}. We recall that via the flow of the vector field $X$ on $V$, $V$ is identified with $OB(F,\psi)$. Abusing the notation, we denote the objects on $OB(F,\psi)$ that correspond to $\alpha$ and $h$ with the same letters respectively. Now we note that the properties of $\alpha_{s,c}$ that we established above are invariant under diffeomorphisms. Hence it would be enough to show that $\ker \alpha=\ker \alpha_{s,c}$ on $OB(F,\psi)$ in order to complete the proof of Proposition \ref{keyprop}. In fact again by the invariance  it is enough to show the following statement. 
\begin{prop}\label{supporting} There exists a diffeomorphism $\Psi:OB(F,\psi)\rightarrow OB(F,\psi)$ such that 
$$\Psi_*(\ker \alpha)=\ker \alpha_{s, c}.$$
\end{prop}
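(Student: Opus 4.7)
The plan is to reduce Proposition \ref{supporting} to the Giroux uniqueness of Proposition \ref{prop21}, followed by Gray stability. The core claim is that, after identifying $V$ with $OB(F,\psi)$ via the flow of the symplectically spinning vector field and replacing $\alpha$ by $s\alpha$ (which does not alter $\ker \alpha$), both $s\alpha$ and $\alpha_{s,c}$ are contact binding 1-forms of a common Liouville open book on $OB(F,\psi)$, namely $(K,\Theta_{OB},\{s\cdot d\lambda\})$. Once this is established, Proposition \ref{prop21} places $\ker \alpha$ and $\ker \alpha_{s,c}$ in the same weakly contractible subset of the space of supported contact structures, hence they are isotopic; Gray stability then produces the required ambient diffeomorphism $\Psi$.

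To exhibit $\alpha_{s,c}$ as a contact binding 1-form, I would construct a defining function $h'$ for $(K,\Theta_{OB})$ on $OB(F,\psi)$ by declaring $|h'| \equiv 1$ on $MT(F_c,\psi)$ and $|h'|(re^{ix},q) = f_{s,c}(r)\cdot r/s$ on the collar $r_c\mathbb{D}\times K$, together with $h'/|h'| = e^{i\Theta_{OB}}$. The gluing is in fact trivial: by \ref{f1} one has $f_{s,c}(r) = s/r$ for $r$ near $r_c$, so $|h'|\equiv 1$ in a neighbourhood of the interface $r = r_c$. Near $r = 0$, $|h'|(r) \sim r/s$ vanishes transversely, so $h'$ is a genuine defining function. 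With this choice, on the mapping torus part
\[
 d(\alpha_s/|h'|)\big|_{TF_x^\circ} \;=\; d\bigl(s\lambda + s\kappa(x)\lambda_\psi\bigr) \;=\; s\cdot d\lambda,
\]
using $d\lambda_\psi = 0$, a consequence of \eqref{monodromy}. On the collar,
\[
 (\alpha_{s,c}/|h'|)\big|_{TF_x^\circ} \;=\; \frac{s}{r}\,\sigma_c \;=\; s\lambda,
\]
whose differential is again $s\cdot d\lambda$. Moreover $|h'|\cdot s\lambda = f_{s,c}(r)\,\sigma_c$ extends smoothly across $K$ and restricts to the contact form $\sigma_c$ on $K$ by \ref{f2}, verifying that $s\lambda$ is an ideal Liouville form. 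Hence $\alpha_{s,c}$ is a contact binding 1-form of the LOB $(K,\Theta_{OB},\{s\cdot d\lambda\})$.

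For the form $\alpha$, by Proposition \ref{prop18} and the definition $\lambda = (\alpha/|h|)\big|_{TF^\circ}$ given in \eqref{lambda}, $\alpha$ on $V$ is a contact binding 1-form for the LOB $(K,\Theta,\{d\lambda\})$, so $s\alpha$ is a contact binding 1-form for $(K,\Theta,\{s\cdot d\lambda\})$. Under the identification $V\cong OB(F,\psi)$ via the flow of $X$, this LOB coincides with the one obtained above for $\alpha_{s,c}$, since the open book structure and the symplectically spinning vector field are intrinsic to the identification. Proposition \ref{prop21} then provides an isotopy between $\ker \alpha$ and $\ker \alpha_{s,c}$ through supported contact structures, and Gray stability converts this isotopy into an ambient isotopy of diffeomorphisms whose time-one map $\Psi$ satisfies $\Psi_\ast(\ker \alpha) = \ker \alpha_{s,c}$.

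The main obstacle I anticipate is the precise bookkeeping required to verify that the LOBs coming from $s\alpha$ (on $V$) and from $\alpha_{s,c}$ (on $OB(F,\psi)$) agree on \emph{all} pages, not just the 0-th one. The profile properties \ref{f1} and \ref{f2} of Lemma \ref{fandg} are exactly what make the construction of $h'$ smooth and transverse at $K$ (since $f_{s,c}(0)=1$), but transporting the ideal Liouville form $\lambda$, defined on $F^\circ \subset V$, to the pages $F_x^\circ$ of $OB(F,\psi)$ via the flow of $X$ and matching the resulting ideal Liouville structures requires using that the monodromy $\psi$ is symplectic, as recorded in \eqref{monodromy}; a direct check along a page of $OB(F,\psi)$ confirms compatibility.
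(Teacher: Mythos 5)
Your proposal is correct, but the core of it takes a genuinely different route from the paper. The paper also ends with Proposition \ref{prop21} plus Gray stability, but it gets there the long way: it chooses a defining function $\widetilde{h}$ whose modulus is constant away from the binding and monotone on a collar (properties \ref{df1}--\ref{df3}), so the ideal Liouville structures $\widetilde{\omega}$ induced by $\alpha_{s,c}/|\widetilde{h}|$ on the pages do \emph{not} agree with the transported $d\lambda$; it must then interpolate $\omega_t=(1-t)\omega+t\widetilde{\omega}$, verify nondegeneracy piece by piece, run a Moser argument, and assemble the resulting isotopy into a fibered diffeomorphism $\Phi$ of $OB(F,\psi)$ that matches the two Liouville open books before Proposition \ref{prop21} can be invoked. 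You instead tailor the defining function to $\alpha_{s,c}$: with $|h'|=f_{s,c}(r)\,r/s$ on the collar and $|h'|\equiv 1$ on $MT(F_c,\psi)$ (smoothly glued thanks to \ref{f1}), the restriction of $\alpha_{s,c}/|h'|$ to each page is exactly $\frac{s}{r}\sigma_c=s\lambda$ near $K$ and $s(\lambda+\kappa(x)\lambda_\psi)$ elsewhere, so every page carries the ILS $s\,d\lambda$ --- the same family, after the harmless rescaling $\alpha\mapsto s\alpha$, as that of the LOB transported from $V$, because $\partial_x$ is symplectically spinning for the latter. The two contact structures are then supported by one and the same LOB and the Moser step becomes superfluous. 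Your route buys brevity; the paper's buys insensitivity to the exact normal form of $\alpha_{s,c}$ near $K$. Two points to tighten: (i) smoothness of $h'$ across $K$ requires $f_{s,c}$ to be a smooth function of $r^2$ near $r=0$ (as in the paper's displayed expression $\alpha_{s,c}=\frac{r^2}{2r_c^2}dx+\bigl(1-\frac{r^2}{2r_c^2}\bigr)\sigma_c$; the condition $f_{s,c}(0)=1$ alone, or \ref{f2} as literally printed, is not enough), and (ii) the claim that the transported ILS on $\{x\}\times F^\circ$ equals $d\lambda$ in product coordinates for \emph{all} $x$ is precisely the statement that $\partial_x$ is the symplectically spinning vector field of the LOB \eqref{LOB2}; this deserves to be stated explicitly rather than left as ``a direct check.''
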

\begin{proof}
We note that the projection $MT(F,\psi)\rightarrow S^1$ induces an obvious open book 
$$\widetilde{\Theta}: OB(F,\psi)\setminus K \rightarrow S^1$$
on $OB(F,\psi)$. We want to show that this is a LOB with the contact binding form $\alpha_{s, c}$. We pick a defining function 
$$\widetilde{h}:OB(F,\psi)\rightarrow \mathbb{C}$$ as follows. Let 
$$\widetilde{u}:F\rightarrow [0,\infty)$$
be a smooth function such that for some suitably chosen $C>0$ and $\delta>0$ the followings hold. 
\begin{enumerate}[label=\textrm{(df\arabic*)}]
\item \label{df1} $\widetilde{u}(r,q)=r\left(1-\frac{r^2}{2r_c^2}\right)$ for
$r$ small enough.
\item \label{df2} $\widetilde{u}\equiv C$ on $([0,r_ c+\delta)\times K)^c$ and ${\rm supp}\,(\psi)\cap \imath_c([0,r_ c+\delta]\times K)=\emptyset$.
\item \label{df3} On $[0,r_ c+\delta]\times K$, $\widetilde{u}$ depends only on $r$ and $\frac{\partial\widetilde{u}}{\partial r}\geq 0$.
\end{enumerate}
Since $\widetilde{u}$ is constant on ${\rm supp}\,(\psi)$, the $S^1$-invariant extension of $\widetilde{u}$ is well-defined on $MT(F,\phi)$ and defines $|\widetilde{h}|$. We put $\widetilde{h}/|\widetilde{h}|:=\widetilde{\Theta}$. Observe that $r\widetilde{\Theta}$ and $1-r^2/2r_c^2$ are smooth functions on $r_c\mathbb{D}\times K$. Hence $\widetilde{h}$ descends to a smooth function on $OB(F,\psi)$. Note also that since $\frac{\partial\widetilde{u}}{\partial r}(0,q)=1$ for all $q\in K$, $\widetilde{h}$ is indeed a defining function, see \ref{h1}.  
\begin{lem} \textit{$d(\alpha_{s, c}/|\widetilde{h}|)$ induces an ILS on each fibre of $\widetilde{\Theta}$.}
\end{lem}
\begin{proof} We put
\begin{eqnarray}\label{lambdatilde}
\widetilde{\lambda}_x:=(\alpha_{s, c}/|\widetilde{h}|)_{|T(\{x\}\times F^\circ)}
\end{eqnarray} 
where $\{x\}\times F^\circ=\widetilde{\Theta}^{-1}(x)$. We check our claim on pieces of $F^\circ$ as follows. 
\begin{itemize}
\item \underline{On $\{x\}\times (0,r_ c]\times K$}: by \ref{df1} we have 
\begin{eqnarray}\label{lambdatildebeforer_eps}
\widetilde{\lambda}_x=\frac{f_{s,c}}{\widetilde{u}}\sigma_{ c}.
\end{eqnarray}
Observe that for given a function $\rho(r)$ on $(0,r_c]$, $d(\rho\,\sigma_c)$ is a positive symplectic form if $\rho>0$ and $\rho'<0$. In fact
$$(d(\rho\,\sigma_c))^n=(\rho'\,dr\wedge \sigma_c+\rho\,d\sigma_c)^n=(n-1)\,\rho^{n-1}\rho'\,dr\wedge \sigma_c\wedge (d\sigma_c)^{n-1}$$
and due to the parametrization (\ref{lambdanearK}), 
$dr\wedge \sigma_{c}\wedge(d\sigma_{c})^{n-1}$ is a negative volume form.

Now we note that $f_{s,c}/\widetilde{u}>0$ and 
$$\left(\frac{f_{s,c}}{\widetilde{u}}\right)'=\frac{f_{s,c}'\widetilde{u}-f_{s,c}\widetilde{u}_r}{\widetilde{u}^2}<0$$
since $f_{s,c}'\widetilde{u}-f_{s,c}\widetilde{u}_r<0$ due to \ref{f'} and \ref{df3}.
\newline
\item \underline{On $\{x\}\times [r_c,r_c+\delta]\times K$}: Note that by \ref{df2}, $\psi={\rm id}$ on this set. Hence we have
\begin{eqnarray}\label{lambdatildeafterr_eps}
\widetilde{\lambda}_x=\frac{s}{r\widetilde{u}}\sigma_{c}.
\end{eqnarray}
By \ref{df3}, $\widetilde{u}+r\widetilde{u}_r>0$ and therefore
$$\left(\frac{s}{r\widetilde{u}}\right)'=-s\,\frac{\widetilde{u}+r\widetilde{u}_r}{r^2\widetilde{u}^2}<0.$$
The claim follows as in the previous case.
\newline
\item \underline{On $\{x\}\times ([0,r_c+\delta)\times K)^c$}: By \ref{df2}  
\begin{eqnarray}\label{lambdatildefaraway}
\widetilde{\lambda}_x=\frac{s}{C}\left(\lambda+\kappa(x)\lambda_\psi\right)\;\Rightarrow\;d\widetilde{\lambda}_x=\frac{s}{C}d\lambda
\end{eqnarray}
which is clearly symplectic.
\end{itemize} 
\end{proof}
From the proof of the above lemma we also see that the ILSs 
$$\{d(\alpha_{s,\varepsilon}/|\widetilde{h}|)_{|T(\{x\}\times F^\circ)}\}_{x\in S^1}$$ are invariant under the flow of $\partial_x$. In fact $\partial_x$ is a symplectically spinning vector field for the LOB 
\begin{eqnarray}\label{LOB1}
\left(K,\widetilde{\Theta},\{d(\alpha_{s,c}/|\widetilde{h}|)_{|T(\{x\}\times F^\circ)}\}_{x\in S^1}\right).
\end{eqnarray}
Now we want to relate this LOB to the initial LOB on $V$, which now reads as
\begin{eqnarray}\label{LOB2}
\left(K,\widetilde{\Theta},\left\{d(\alpha/|h|)_{|T(\{x\}\times F^\circ)}\right\}_{x\in S^1}\right).
\end{eqnarray}
We also note that $\partial_x$ the symplectically spinning vector field for the open book \eqref{LOB2}, which corresponds to $X$ on $V$. In particular the ILSs
$$\left\{d(\alpha/|h|)_{|T(\{x\}\times F^\circ)}\right\}_{x\in S^1}$$
are invariant under the flow of $\partial_x$.
\begin{lem} There exists a diffeomorphism 
\begin{eqnarray}\label{Phi}
\Phi: OB(F,\psi)\rightarrow OB(F,\psi)
\end{eqnarray}
such that $\Phi\circ \widetilde{\Theta}=\widetilde{\Theta}\circ \Phi$ and the restriction of $\Phi$ to each fibre is symplectic, that is, for all $x\in S^1$,
$$\Phi^*d(\alpha_{s,c}/|\widetilde{h}|)_{|T(\{x\}\times F^\circ)}=d(\alpha/|h|)_{T(\{x\}\times F^\circ)}.$$
\end{lem}
\begin{proof}  
We have the following symplectic forms on $F^\circ$:
\begin{equation}\label{omegatilde}
\widetilde{\omega}:=d(\alpha_{s,c}/|\widetilde{h}|)_{|T(\{0\}\times F^\circ)},
\end{equation}
\begin{equation}\label{omega}
\omega:=d(\alpha/|h|)_{|T(\{0\}\times F^\circ)}.
\end{equation}
We want to show that
$$\omega_t:=(1-t)\omega+t\widetilde{\omega},\; t\in [0,1]$$
is a path of symplectic forms on $F^\circ$. 
Using the primitive of $\omega$ given by (\ref{lambda}) and  the primitive of $\widetilde{\omega}$ given by (\ref{lambdatilde}), we check our claim on the separate pieces of $F^\circ$. Let $t\in[0,1]$ be fixed. 
\begin{itemize}
\item \underline{On $\{x\}\times (0,r_c]\times K$}: By \eqref{lambdanearK} and \eqref{lambdatildebeforer_eps} we have 
$$\omega_t=d\left((1-t)\,\frac{1}{r}\sigma_c+t\,\frac{f_{s,c}}{\widetilde{u}}\sigma_c\right)=d\left(\rho\,\sigma_c\right)$$
where 
$$\rho:=(1-t)\,\frac{1}{r}+t\,\frac{f_{s,c}}{\widetilde{u}}.$$ 
We note that the functions $1/r$ and $f_{s,c}/\widetilde{u}$ are both positive and have strictly negative derivatives, see the proof of previous lemma. It follows that $\rho>0$ and $\rho'<0$. Hence $d(\rho\,\sigma_c)$ is a positive symplectic form.
\newline
\item \underline{On $\{x\}\times [r_c,r_c+\delta]\times K$}: By \eqref{lambdanearK} and \eqref{lambdatildeafterr_eps}  we get $\omega_t=\left(\rho(r)\,\sigma_c\right)$ where $$\rho=d(1-t)\frac{1}{r}\sigma_c+t\frac{s}{r\widetilde{u}}\sigma_c.$$
The claim follows as before.
\newline
\item \underline{On $\{x\}\times ([0,r_c+\delta)\times K)^c$}: By (\ref{lambdatildefaraway}) we have
$$\omega_t=(1-t)d\lambda+ t\frac{s}{C}d\lambda=\left((1-t)+\frac{ts}{C}\right)d\lambda.$$
\end{itemize}
Hence $\omega_t$ is symplectic on $F^\circ$ for all $t\in [0,1]$.

Reparametrizing the interpolation interval $[0,1]$ as $[0,2\pi]$ leads to a  smooth path of symplectic structures $\{\omega_t\}_{t \in [0,2\pi]}$ on $F^\circ$ such that $\omega_0 = \omega$ and $\omega_{2\pi} = \widetilde \omega$. Moreover by (\ref{lambdanearK}), \ref{f2} and \ref{df1}
$$\omega=\widetilde{\omega}=d\left(\frac{1}{r}\sigma_c\right)=\omega_t$$
near $K$ for all $t\in[0,2\pi]$. Then after applying the standard Moser argument we get a smooth isotopy 
$\{\psi_t\}_{t\in [0,2\pi]}$ of $F$ such that
\smallskip
\begin{enumerate}[label=\text{($\Psi$\arabic*)}]
\item   \label{psi1}   $\psi_0 = {\rm id}$; 

\smallskip
\item   \label{psi2}   $\psi_t={\rm id}$ near $K$ for all $t \in [0,2\pi]$; 

\smallskip
\item   \label{psi3}   $\psi_t^* \, \omega_t = \omega_0=\omega$ for all $t \in [0,2\pi]$.
\end{enumerate}
Now we define $\Phi : [0,2\pi] \times F \rightarrow [0,2\pi] \times F$ by
\begin{eqnarray}\label{isotopy}
\Phi(x,p) := \left( x,\psi_{2\pi} \circ \psi^{-1}_x \circ \psi^{-1} \circ \psi_x(p) \right)
\end{eqnarray}
where $\psi$ is the the monodromy given by \eqref{monodromy}. We note that 
$$
\Phi(2\pi,p) = \left(2\pi, \psi^{-1} \circ \psi_{2\pi}(p) \right),
$$
and by \ref{psi1}, 
$$
\Phi(0,\psi(p)) 
= \left(0, \psi_{2\pi}(p) \right) = \left( 0, \psi(\psi^{-1} \circ \psi_{2\pi}(p)) \right).
$$
Hence $\Phi$ descends to a diffeomorphism on $MT(F,\psi)$. Since $\psi = {\rm id}$ near $K$
and $\psi_x = {\rm id}$ near $K$ for each~$x$ by~\ref{psi2}, we have that $\Phi = {\rm id}$ on a neighbourhood of $\partial MT(F,\psi)$. 
Hence $\Phi$ descends to a diffeomorphism on~$OB(F,\psi)$ and by definition, $\Phi$ commutes with~$\widetilde {\Theta}$. 

We recall that $\psi^* \omega = \omega$ and by construction we also have $\psi^*\widetilde \omega=\widetilde\omega$. Therefore, $\psi^*\omega_t = \omega_t$ for all~$t \in [0,2\pi]$. 

As noted above $\partial_x$ is a symplectically spinning vector field for both LOBs~\eqref{LOB1} and~\eqref{LOB2}. Therefore once $\{x\} \times F^\circ$ is identified with $F^\circ=\{0\} \times F^\circ$ via the flow
of~$\partial_x$ then $d(\alpha_{s,c}/|\widetilde{h}|)_{|T(\{x\}\times F^\circ)}$ is identified with $\widetilde{\omega}$ and $d(\alpha/|h|)_{|T(\{x\}\times F^\circ)}$ is identified with $\omega$. Using these identifications and combining \eqref{isotopy} with \ref{psi3} we get 
\begin{eqnarray*}
\Phi^* d \bigl( \alpha_{s,c}/|\widetilde{h}| \bigr) |_{T (\{x\} \times F^\circ)}
&=&\big(\psi_{2\pi}\circ\psi^{-1}_x\circ \psi^{-1}\circ \psi_x\big)^* \widetilde \omega \\
&=&\psi_x^* \, (\psi^{-1})^* \, (\psi^{-1}_x)^* \, \psi_{2\pi}^* \omega_{2\pi} \\
&=&\psi_x^* \, (\psi^{-1})^* \, (\psi^{-1}_x)^* \, \omega_0  \\
&=&\psi_x^* \, (\psi^{-1})^* \omega_x  \\
&=&\psi_x^* \, \omega_x  \\
&=&\omega_0 \\
&=& \omega \\
&=& d(\alpha/|h|) |_{T (\{x\} \times F^\circ)} .
\end{eqnarray*} 
The proof of the lemma is complete.
\end{proof}
Now in order to finish the proof of Proposition \ref{supporting}, we fix the defining function $\Phi^*\widetilde{h}$ for the open book $(K,\widetilde{\Theta})$, which is indeed a defining function due to the construction of $\Phi$. Then as noted in the previous section, $|\Phi^*\widetilde{h}|/|h|$ is a positive function on  $OB(F,\psi)$. Consequently we have two contact forms 
$$\Phi^*\alpha_{s,c}\;\;{\rm and}\;\; \frac{|\Phi^*\widetilde{h}|}{|h|}\,\alpha$$ 
on $OB(F,\psi)$ such that 
$$d\left(\frac{\Phi^*\alpha_{s,c}}{|\Phi^*\widetilde{h}|}\right)=\Phi^*d(\alpha_{s,c}/|\widetilde{h}|)=d(\alpha/|h|)=d\left(\frac{\frac{|\Phi^*\widetilde{h}|}{|h|}\,\alpha}{|\Phi^*\widetilde{h}|}\right)$$
on $T (\{x\} \times F^\circ)$ for all $x\in S^1$. That is $\ker \Phi^*\alpha_{s,c} $ and $\ker \alpha=\ker \frac{|\Phi^*\widetilde{h}|}{|h|}\alpha$ are symplectically supported by the same LOB. Hence by Proposition \ref{prop21}, $\ker\Phi^*\alpha_{s,c}$ is isotopic to $\ker \alpha$. By Gray's stability theorem there is a diffeomorphism $\widetilde\Phi$ of $OB(F,\psi)$ such that $\widetilde\Phi_* (\ker \alpha) = \ker \Phi^* \alpha_{s,c}$. We set $\Psi := \Phi \circ\widetilde\Phi$ and get $$\Psi_* (\ker \alpha) = \ker \alpha_{s,c}.$$

\end{proof}

\end{document}